\documentclass[11pt,a4paper]{article}

\usepackage[utf8]{inputenc}
\usepackage[english]{babel}
\usepackage{amsmath,amssymb,amsthm}
\usepackage{thmtools}
\usepackage[table,dvipsnames]{xcolor}
\usepackage{booktabs}
\usepackage{multirow}
\usepackage[scale=0.75]{geometry}
\usepackage[ruled,vlined,linesnumbered]{algorithm2e}
\usepackage{titlesec}
\usepackage{graphicx}
\usepackage{subcaption}
\usepackage{booktabs}

\usepackage[pagebackref,backref=page]{hyperref}
\usepackage{cite}
\renewcommand*{\backrefalt}[4]{%
    \ifcase #1 \footnotesize{(Not cited.)}%
    \or        \footnotesize{(Cited on page~#2)}%
    \else      \footnotesize{(Cited on pages~#2)}%
    \fi}

\titleformat{\subsection}[runin]{\normalfont\bfseries}{\thesubsection.}{0.5em}{}[.]
\titlespacing*{\subsection}{0pt}{1.2ex plus .3ex}{0.6em}

\definecolor{shadecolor}{gray}{0.95}
\declaretheoremstyle[
  headfont=\normalfont\bfseries, notefont=\mdseries, notebraces={(}{)},
  bodyfont=\normalfont, postheadspace=0.5em, spaceabove=2pt,
  mdframed={skipabove=8pt, skipbelow=8pt, hidealllines=true,
            backgroundcolor={shadecolor}, innerleftmargin=4pt, innerrightmargin=4pt}
]{shaded}
\declaretheorem[style=shaded,within=section]{theorem}
\declaretheorem[style=shaded,sibling=theorem]{lemma}
\declaretheorem[style=shaded,sibling=theorem]{proposition}
\declaretheorem[style=shaded,sibling=theorem]{corollary}
\declaretheorem[style=shaded,sibling=theorem]{assumption}
\declaretheorem[style=shaded,sibling=theorem]{definition}
\declaretheorem[style=shaded,sibling=theorem]{remark}

\newcommand{\RR}{\mathbb{R}}
\newcommand{\EE}{\mathbb{E}}
\newcommand{\Eidx}{\mathbb{E}_{\mathcal{B}}}   
\newcommand{\Enoise}{\mathbb{E}_{\varepsilon}} 
\newcommand{\Etot}{\mathbb{E}}                 
\newcommand{\PP}{\mathbb{P}}
\newcommand{\Tr}{\operatorname{Tr}}
\newcommand{\Diag}{\operatorname{Diag}}
\newcommand{\inprod}[2]{\left\langle #1, #2\right\rangle}
\newcommand{\norm}[1]{\lVert #1\rVert}
\DeclareMathOperator*{\argmin}{argmin}
\newcommand{\hatx}{\hat{x}}
\newcommand{\Bcal}{\mathcal{B}}

\newcommand{\Df}[3]{D_f^{#1}(#2,#3)}
\newcommand{\smax}{\sigma_{\max}}
\newcommand{\Cnoise}{C_{\mathrm{noise}}}
\newcommand{\eqdef}{\overset{\text{def}}{=}}

\usepackage[colorinlistoftodos,bordercolor=orange,backgroundcolor=orange!20,linecolor=orange,textsize=scriptsize]{todonotes}

\title{Accelerated Exact Recovery from Noisy Data via Averaging and Noise-Aware Adaptive Bregman–Kaczmarz}

\author{Lionel Tondji\thanks{Institute for Analysis and Algebra, TU Braunschweig, 38092 Braunschweig, Germany and African Institute for Mathematical Sciences (AIMS), \texttt{tngoupeyou@aimsammi.org}.},
  Abakar A. Mahamat\thanks{African Institute for Mathematical Sciences (AIMS) and AMMI Senegal, \texttt{amahamat@aimsammi.org}},
  Idriss Tondji\thanks{University of Pisa, Pisa, Italy, \texttt{idriss.tondji@phd.unipi.it}}}
\date{\today}

\begin{document}
\maketitle

\begin{abstract}
The adaptive Bregman–Kaczmarz method recovers the exact, noise-free solution of a linear inverse problem even when every measurement it queries is corrupted, provided the corruption is fresh, independent and zero-mean. A block version was proposed for parallel hardware, but whether larger blocks actually converge faster was left open. We show the answer hinges on how the block is used: replacing the block sum with a block average collapses the analysis onto a single positive-semidefinite matrix, through which we prove that the guaranteed convergence improves monotonically with the batch size, with a total gain governed by the stable rank of the system matrix. We then address heterogeneous noise by introducing a noise-aware weighting that down-weights unreliable measurements, and prove it is strictly better than uniform weighting whenever the noise is not proportional to the row norms — a condition that essentially never holds in practice. The two improvements are compatible and their benefits combine. Finally, we show the adaptive step size interpolates automatically between a fast initial phase and a slowly vanishing tail that carries the error exactly to zero, and we explain how its hyperparameters can be estimated without knowing the true solution. Numerical experiments under sparse, heterogeneous noise illustrate these findings and confirm that the heuristic estimates produce effective step sizes.
\end{abstract}

\noindent
\textbf{Keywords:} Randomized Bregman-Kaczmarz method, adaptive stepsize, averaging, inverse problems
\medskip

\noindent
\textbf{AMS Classification:}
65F10, 
15A29, 
65F20 
\section{Introduction}\label{sec:intro}
Many problems in imaging, signal processing and data science reduce to recovering an unknown
$\hatx$ from indirect measurements modeled by a linear map $\mathbf{A}$. The measurements are never
exact: instead of the clean data $b=\mathbf{A}\hatx$ one has access only to a corrupted version, and the
task is to reconstruct $\hatx$ from this corrupted data together with the known
operator~\cite{engl1996regularization,mueller2021}. When $\mathbf{A}$ is large, forming or factorising it
is impractical, and methods that touch only a handful of its rows per step become attractive.
These \emph{row-action} and \emph{block-action} schemes trace back to the Kaczmarz
iteration~\cite{Kac37}, rediscovered in tomography as the algebraic reconstruction
technique~\cite{gordon1970algebraic}, and they remain a workhorse wherever the operator is too
large to store densely.

We study a
generalization of the Kaczmarz method, namely the Bregman–Kaczmarz method. The Bregman-Kaczmarz
method~\cite{lorenz2014linearized,schopfer2019linear} replaces the Euclidean projections of the
classical iteration by Bregman projections with respect to a strongly convex, everywhere-finite
function $f$, and thereby solves the constrained problem
\begin{equation}\label{eq:PB}
  \hatx \;\eqdef\; \argmin_{x\in\RR^n} f(x)\quad\text{subject to}\quad \mathbf{A}x=b.
\end{equation}
The difficulty addressed here is the noise. In the \textit{independent-noise} model of the adaptive Bregman–Kaczmarz (ABK) method~\cite{tondji2024adaptive}, each access to a row returns that entry of $b$ perturbed by a fresh, zero-mean error, yet an adaptively tuned step size still drives the iterates to the \emph{exact} solution $
\hat x$. ABK includes a block variant for parallel hardware, but its analysis leaves open whether larger blocks actually converge faster. We resolve this by \emph{averaging} the row contributions in each block rather than \emph{summing} them and by additionally exploiting knowledge of the per-row noise levels through a non-uniform weighting. We call the method Adaptive Averaged Bregman–Kaczmarz (AABK), see Fig~\ref{fig:aabk-iteration}.

\subsection{Problem statement}\label{sec:problem}
We are given a matrix $\mathbf{A}\in\RR^{m\times n}$ with rows $a_1^\top,\dots,a_m^\top$ and a $1$-strongly
convex function $f:\RR^n\to\RR$ (any strongly convex $f$ may be rescaled to $\mu=1$). Assuming the
system $\mathbf{A}x=b$ is consistent, we seek the minimum-$f$ solution
\[
   \hatx \;=\; \argmin_{x\in\RR^n} f(x)\quad\text{subject to}\quad \mathbf{A}x=b .
\]
We do not observe $b$ directly. Each time the algorithm queries entry $j$ at iteration $k$, it
receives a \emph{fresh} noisy sample
\begin{equation}\label{eq:noise-model}
  \tilde b_j^{(k)} \;=\; b_{i_j} + \varepsilon_j^{(k)},\qquad
  \Enoise\!\big[\varepsilon_j^{(k)}\big]=0,\qquad \Enoise\!\big[(\varepsilon_j^{(k)})^2\big]=\sigma_j^2,
\end{equation}
where the scalar noises $\varepsilon_j^{(k)}$ are independent across both the row index $j$ and
the iteration $k$. We refer to~\eqref{eq:noise-model} as the \textit{independent-noise} model; it is
precisely the freshness of the noise at each query that makes exact recovery of $\hatx$ possible,
rather than mere convergence to a noise ball. We write $\mathbf{\Sigma}=\Diag(\sigma_1^2,\dots,\sigma_m^2)$ and call $\sigma=(\text{Tr}(\mathbf{\Sigma}))^{1/2}$ the total noise level. The strongly convex $f$
resolves non-uniqueness when $n>m$ and a suitable
choice of $f$ shapes the solution: taking $f(x)=\lambda\norm{x}_1+\tfrac12\norm{x}_2^2$ promotes
sparsity, recovering the sparse Kaczmarz method of~\cite{schopfer2019linear}.

\subsection{Related work}\label{sec:related}
The Kaczmarz literature is large; we organise it around the two ingredients this paper
combines \emph{averaging} a block of row updates and \emph{adapting} the step size to handle
noise---noting for each line what it achieves and what it leaves open.

\emph{Randomisation and the noise ball.} Randomised Kaczmarz is due to Strohmer and
Vershynin~\cite{strohmer2009randomized}, who proved exponential expected convergence under the
sampling rule $p_i\propto\norm{a_i}^2$. Needell~\cite{needell2010randomized} extended it to
inconsistent systems, where the iterates converge to a noise ball whose radius scales with the
residual at the solution, the natural fixed-step picture. Zouzias and
Freris~\cite{zouzias2013randomized} escape the ball via an extended scheme converging to the
least-squares solution, later sharpened for sparse and impulsive-noise problems by Sch\"opfer et al.~\cite{schopfer2022extended}. These works treat noise as a
\emph{fixed} perturbation rather than something the algorithm can average away.

\emph{Bregman geometry.} Sch\"opfer and Lorenz~\cite{schopfer2019linear} introduced the
Bregman--Kaczmarz framework, replacing Euclidean projections by Bregman projections for a
strongly convex $f$; with $f(x)=\lambda\norm{x}_1+\tfrac12\norm{x}_2^2$ one obtains a sparse
solver, and the error-bound condition we use (Assumption~\ref{ass:errorbound}) originates here.
A large body of work accelerates the (randomized) Kaczmarz method by other means; block strategies, alternative sampling schemes, and extrapolation among them; see for instance~\cite{bai2018greedy,eldar2011acceleration,haddock2021greed,liu2016accelerated,needell2014paved,steinerberger2021weighted,tondji2023accelerated,tondji2024acceleration,tondji2021linear,tondji2024advances,zhang2023weighted}. These analyses are noise-free.

\emph{Block methods and averaging.} Necoara~\cite{necoara2019faster} analysed block Kaczmarz
with extrapolation and rates depending on the block structure of $A$. Moorman et al.~\cite{moorman2021randomized} introduced randomised Kaczmarz with \emph{averaging},
producing a $1/\tau$ variance reduction; Tondji and Lorenz~\cite{tondji2023faster} carried
averaging to the Bregman setting (their RSKA method) and treated the inconsistent case, where
fixed-step averaging still converges only to a noise ball.

\emph{Adaptive step sizes and independent noise.} A line complementary to the corruption-robust
methods above keeps \emph{all} entries of $b$ noisy but assumes the corruptions are independent
zero-mean random variables. Marshall and Mickelin~\cite{marshall2023optimal} showed, for a fixed
noisy right-hand side $\tilde b$, that one can recover $\hatx$ to any accuracy given enough
equations when rows are sampled \emph{without replacement}: this guarantees that the noise in each
iterate is independent of the noise seen so far. Their guarantee, however, holds for only a single
pass over the matrix at most $m$ iterations so it cannot in practice drive the error to
zero after one epoch. The adaptive Bregman-Kaczmarz in~\cite{tondji2024adaptive} modifies this to the \emph{independent}-noise
model~\eqref{eq:noise-model}, in which every query returns a fresh independent sample, and proves
that an adaptive step size shrinking at the right rate recovers $\hatx$ \emph{exactly} over
arbitrarily many iterations. A distinct, corruption-oriented response is the quantile line of work (see~\cite{coria2024quantile,haddock2022quantile,steinerberger2023quantile,zhang2022quantile}), which assumes a small fraction of entries are
adversarially corrupted and rejects rows by residual quantile.

\emph{Where this paper sits.} The two threads that meet our work are averaging
(\cite{tondji2023faster}) and adaptive stepping (\cite{tondji2024adaptive}); they are
complementary, as the table makes explicit.
\begin{center}
\begin{tabular}{lcc}
\toprule
 & fixed step & adaptive step (independent noise)\\
\midrule
no averaging & RK ~\cite{gower2019adaptive,needell2010randomized,strohmer2009randomized}, sparse RK \cite{lorenz2014linearized,schopfer2019linear}
             & ABK \cite{marshall2023optimal,tondji2024adaptive}\\
averaging    & RKA \cite{moorman2021randomized}, RSKA \cite{tondji2023faster}
             & \textbf{AABK (this paper)}\\
\bottomrule
\end{tabular}
\end{center}
\noindent RSKA averages but, in the inconsistent case, converges only to a noise ball; ABK
recovers $\hatx$ exactly but its block bound does not improve with $\tau$, since its update is a
sum rather than an average. Combining the two strengths is not a formality: it requires the spectral
object $\mathbf{T}$ that governs the joint analysis, the proof that $\smax(\mathbf{T})$ decreases monotonically in
$\tau$ (Proposition~\ref{prop:taumono}), and an adaptive step that respects this monotonicity.
Moreover, none of the above works use the noise variances $\sigma_i^2$ to design the sampling
distribution; all default to $p_i\propto\norm{a_i}^2$, optimal for the noiseless geometry but
blind to noise. Our Corollary~\ref{cor:optimal} closes this gap with the rule
$p_i\propto\sigma_i\norm{a_i}$, provably reducing the noise constant by a factor that is at most
$m$ when the row norms are equal and grows without bound only when the most corrupted rows have small geometric influence. In short, AABK enjoys monotone acceleration in
$\tau$ from averaging \emph{and} exact recovery of $\hatx$ from adaptive stepping, in a single
method whose analysis reduces to the matrix $\mathbf{T}$.

\paragraph{Contribution.} Our main contributions are as follows.
\begin{enumerate}
 \item We introduce the AABK method and derive its optimal adaptive step-size rule that guarantees convergence to the noise-free solution
    (Theorem~\ref{thm:main}).
  \item We prove that the convergence rate of AABK is monotonically improving in the batch size
    $\tau$, with an explicit limit and gain governed by the stable rank of $\mathbf{A}$
    (Proposition~\ref{prop:taumono}), giving a clear theoretical justification for larger batches.
  \item We develop a noise-aware non-uniform weighting scheme and prove its theoretical
    superiority: weights proportional to $\|a_i\|/\sigma_i$ yield a provably smaller rate constant (Corollary~\ref{cor:optimal}).
  \item We analyse the asymptotic behaviour of the adaptive step size, showing the automatic
    transition from a constant-step (linear) regime to a decaying-step (stochastic) regime, and
    we discuss how the hyperparameters are estimated in practice (\S\ref{sec:asymp}--\ref{sec:hyper}).
\end{enumerate}

\section{Results}\label{sec:results}

Let define the diagonal matrices :
\[
  \mathbf{D}=\Diag(\norm{a_1},\dots,\norm{a_m}),\quad
  \mathbf{P}=\Diag(p_1,\dots,p_m),\quad
  \mathbf{W}=\Diag(w_1,\dots,w_m),
\]
where $p_i$ is the probability of sampling row $i$ and $w_i\ge0$ its weight. Following common practice in the randomized Kaczmarz literature~\cite{moorman2021randomized,strohmer2009randomized,tondji2023faster}, we impose throughout
the \emph{coupling}
\begin{equation}\label{eq:coupling}
  \frac{p_i w_i}{\norm{a_i}^2}=\frac{\alpha}{\norm{\mathbf{A}}_F^2}\quad\text{for all }i,
  \qquad\text{equivalently}\qquad \mathbf{PWD}^{-2}=\frac{\alpha}{\norm{\mathbf{A}}_F^2}\,\mathbf{I},
\end{equation}
with relaxation parameter $\alpha>0$. The coupling ties the sampling distribution to the weights
through the single scalar $\alpha$; under it, the inner-product term in the analysis collapses to
a clean Rayleigh quotient. Given the coupling, choosing the $w_i$ determines the $p_i$ and vice
versa, and one of the two natural choices is provably better (\S\ref{sec:weights}).

\subsection{The algorithm}\label{sec:algorithm}
We were motivated by the following heuristic. Suppose $x_k$ is given and we query the $i$-th
equation, but its noise variance $\sigma_i^2$ is large. Then we should not take a full step that
exactly satisfies this equation: the measurement is mostly noise, and trusting it blindly would
undo the progress of earlier iterations. We should actively suppress equations whose incoming
data $\tilde b_i$ are known to be unreliable. Moreover, stepping on a single noisy equation
prevents the iterate from ever settling. A natural remedy is therefore to query a batch of $\tau$
equations simultaneously, average their proposed updates so that the zero-mean noise partly
cancels, and weight each equation inversely to its unreliability. Uniform averaging of updates is
classical, it underlies mini-batch stochastic gradient descent and has been analysed for
Kaczmarz methods in~\cite{moorman2021randomized,tondji2023faster}. We adopt this idea in the
Bregman setting and add a dynamically shrinking step size $\eta_k$ that drives the remaining
averaged error exactly to zero.

Concretely, at each iteration $k$ we sample a batch of indices $\Bcal_k\subseteq\{1,\dots,m\}$ of
size $\tau$, i.i.d.\ from $p$ (with replacement), and form the weighted average of the individual
Kaczmarz directions,
\begin{equation}\label{eq:dk}
  d_k=\frac1\tau\sum_{j\in\Bcal_k} w_i\,
      \frac{\inprod{a_{i_j}}{x_k}-\tilde b_j^{(k)}}{\norm{a_{i_j}}^2}\,a_{i_j},
\end{equation}
where $\tilde b_j^{(k)}$ is a fresh noisy measurement. We then update the dual and primal
variables via
\begin{equation}\label{eq:update}
  x_{k+1}^*=x_k^*-\eta_k d_k,\qquad x_{k+1}=\nabla f^*(x_{k+1}^*).
\end{equation}

\begin{figure}[t]
  \centering
  \includegraphics[width=\linewidth]{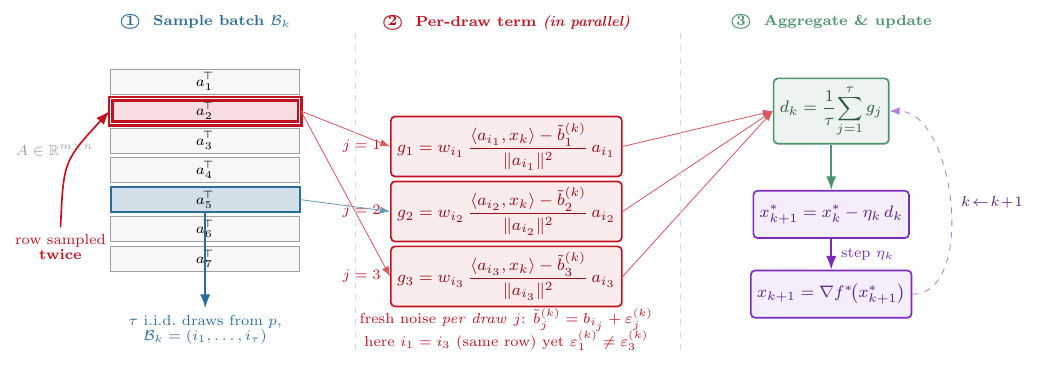}
  \caption[One iteration of AABK]{\textbf{Anatomy of one AABK iteration.}
  Starting from $x_k=\nabla f^*(x_k^*)$, the method
  \textcircled{\scriptsize 1} samples a batch $\Bcal_k=(i_1,\dots,i_\tau)$ of $\tau$
  rows i.i.d.\ from $p$ (with replacement, so a row may recur---here $i_1=i_3$);
  \textcircled{\scriptsize 2} computes in parallel one residual term
  $g_j=w_{i_j}\frac{\inprod{a_{i_j}}{x_k}-\tilde b_j^{(k)}}{\norm{a_{i_j}}^2}a_{i_j}$
  per draw, each using a \emph{fresh} noisy value
  $\tilde b_j^{(k)}=b_{i_j}+\varepsilon_j^{(k)}$ (indexing noise by the draw $j$
  guarantees independence even when a row is drawn twice); and
  \textcircled{\scriptsize 3} averages them into $d_k=\frac1\tau\sum_{j=1}^{\tau}g_j$,
  takes the adaptive dual step $x_{k+1}^*=x_k^*-\eta_k d_k$, and mirrors back via
  $x_{k+1}=\nabla f^*(x_{k+1}^*)$. The dashed arrow closes the loop for iteration $k{+}1$.}
  \label{fig:aabk-iteration}
\end{figure}

The method rests on three pillars: the batch size $\tau$ to reduce variance, the weights $w_i$ to
handle heteroskedasticity, and the adaptive step $\eta_k$ to guarantee exact convergence. For
$\tau=1$ and uniform weights it reduces to ABK~\cite{tondji2024adaptive}; in the noiseless case
with uniform weights it reduces to RSKA~\cite{tondji2023faster,tondji2024advances}. Computing $d_k$ is a sparse
matrix--vector product(\S\ref{sec:matvec}) and is embarrassingly parallel across the batch.

\begin{algorithm}[H]
\caption{Adaptive Averaged Bregman--Kaczmarz (AABK)}\label{alg:aabk}
\KwIn{$\mathbf{A}\in\RR^{m\times n}$; strongly convex $f$; weights $\{w_i\}$ and probabilities
      $\{p_i\}$ satisfying~\eqref{eq:coupling}; batch size $\tau$; initial $x_0^*$,
      $x_0=\nabla f^*(x_0^*)$.}
\KwOut{Approximate minimum-$f$ solution of $\mathbf{A}x=b$.}
\For{$k=0,1,2,\dots$ until a stopping criterion holds}{
  Sample a batch $\Bcal_k = \{i_1, \dots,i_{\tau}\}\subseteq\{1,\dots,m\}$, $|\Bcal_k|=\tau$, i.i.d.\ from $p$\;
  For each $j\in\Bcal_k$ query a fresh noisy value $\tilde b_j^{(k)}=b_{i_j}+\varepsilon_j^{(k)}$\;
  $d_k\leftarrow \dfrac1\tau\sum_{j\in\Bcal_k} w_{i_j}\dfrac{\inprod{a_{i_j}}{x_k}-\tilde b_j^{(k)}}
        {\norm{a_{i_j}}^2}\,a_{i_j}$\tcp*{in parallel}
  Set $\eta_k$ by~\eqref{eq:etaopt}\;
  $x_{k+1}^*\leftarrow x_k^*-\eta_k d_k$;\quad $x_{k+1}\leftarrow\nabla f^*(x_{k+1}^*)$\;
}
\Return $x_k$\;
\end{algorithm}

\subsection{Main convergence theorem}\label{sec:mainthm}
The convergence is controlled by a single symmetric, positive semidefinite matrix
\begin{equation}\label{eq:Tdef}
  \mathbf{T}=\frac{1}{2\tau}\,\mathbf{W}+\frac{\alpha}{2\norm{\mathbf{A}}_F^2}\Big(1-\frac1\tau\Big)\mathbf{AA}^\top .
\end{equation}

\begin{theorem}[Convergence of AABK]\label{thm:main}
Assume the coupling~\eqref{eq:coupling} and Assumption~\ref{ass:errorbound}. With $\mathbf{T}$ as
in~\eqref{eq:Tdef}, define the adaptive step size and auxiliary sequence
\begin{equation}\label{eq:etaopt}
  \eta_k=\frac{\alpha\gamma\beta_k}{1+2\alpha\gamma\,\smax(\mathbf{T})\,\beta_k},\qquad
  \beta_{k+1}=\beta_k\Big(1-\tfrac{\alpha\gamma\eta_k}{2}\Big),
\end{equation}
initialized at $\beta_0=\tau\,\Df{x_0^*}{x_0}{\hatx}/\Tr(\mathbf{PW}^2\mathbf{\Sigma D}^{-2})$ (estimated as in
\S\ref{sec:hyper}). If $(x_k)$ is generated by~\eqref{eq:dk}--\eqref{eq:update}, then
\begin{equation}\label{eq:main-rate}
  \Etot\,[\norm{x_k-\hatx}_2^2]\;\le\;\frac{2\,\Tr(\mathbf{PW}^2\mathbf{\Sigma D}^{-2})}{\tau}\,\beta_k,
\end{equation}
where $\beta_k$ is deterministic, strictly decreasing and converges to zero; in particular
$$\Etot\,[\norm{x_k-\hatx}_2^2]\to0.$$
\end{theorem}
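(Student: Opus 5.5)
We track the Bregman distance $\Df{x_k^*}{x_k}{\hatx}$ and derive a one-step recursion in expectation, following the ABK analysis of~\cite{tondji2024adaptive} with the block sum replaced by the average. For a $1$-strongly convex $f$, the standard Bregman–Kaczmarz inequality gives
\[
D_f^{x_{k+1}^*}(x_{k+1},\hatx)\le \Df{x_k^*}{x_k}{\hatx}-\eta_k\inprod{d_k}{x_k-\hatx}+\tfrac{\eta_k^2}{2}\norm{d_k}_2^2 .
\]
Here we use $x^*_{k+1}=x_k^*-\eta_k d_k$ and the $1$-smoothness of $f^*$. We condition on $x_k$ and take expectations over the batch $\Bcal_k$ and over the fresh noise $\varepsilon^{(k)}$. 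These are independent of $x_k$, so the cross terms vanish.

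\textbf{First-order term.} Write $r=\mathbf{A}x_k-b$. Since the noise has zero mean and $\EE_{\Bcal}$ of a single draw gives $\sum_i p_iw_i r_i a_i/\norm{a_i}^2$, the coupling~\eqref{eq:coupling} yields
\[
\EE\inprod{d_k}{x_k-\hatx}=\frac{\alpha}{\norm{\mathbf{A}}_F^2}\norm{r}^2 .
\]

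\textbf{Second moment.} Split $\norm{d_k}^2$ into pairs $j\neq l$ and diagonal terms $j=l$.
\begin{itemize}
\item For $j\neq l$, the draws and noises are independent, giving $(1-\tfrac1\tau)\norm{\tfrac{\alpha}{\norm{\mathbf{A}}_F^2}\mathbf{A}^\top r}^2=(1-\tfrac1\tau)\tfrac{\alpha^2}{\norm{\mathbf{A}}_F^4}r^\top \mathbf{AA}^\top r$.
\item The diagonal terms give $\tfrac1\tau\sum_i p_iw_i^2(r_i^2+\sigma_i^2)/\norm{a_i}^2$. Applying the coupling once, $p_iw_i^2/\norm{a_i}^2=\tfrac{\alpha}{\norm{\mathbf{A}}_F^2}w_i$, so the residual part equals $\tfrac{\alpha}{\tau\norm{\mathbf{A}}_F^2}r^\top\mathbf{W}r$ and the noise part equals $\tfrac1\tau\Tr(\mathbf{PW}^2\mathbf{\Sigma D}^{-2})$.
\end{itemize}
Altogether,
\[
\EE\norm{d_k}^2=\tfrac{2\alpha}{\norm{\mathbf{A}}_F^2}\,r^\top\mathbf{T}r+\tfrac{1}{\tau}\Tr(\mathbf{PW}^2\mathbf{\Sigma D}^{-2}) .
\]
This is exactly where averaging produces the single matrix $\mathbf{T}$. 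Bounding $r^\top\mathbf{T}r\le\smax(\mathbf{T})\norm{r}^2$ gives
\[
\EE D_{k+1}\le D_k-\tfrac{\alpha}{\norm{\mathbf{A}}_F^2}\big(\eta_k-\eta_k^2\smax(\mathbf{T})\big)\norm{r}^2+\tfrac{\eta_k^2}{2\tau}\Tr(\cdots).
\]
Provided $\eta_k\smax(\mathbf{T})\le 1/2$, Assumption~\ref{ass:errorbound} gives $\norm{\mathbf{A}x_k-b}^2\ge\gamma\norm{\mathbf{A}}_F^2 D_k$ (up to the paper's normalization of $\gamma$). Taking total expectations then yields the scalar recursion
\[
\delta_{k+1}\le\big(1-\alpha\gamma\eta_k(1-\eta_k\smax(\mathbf{T}))\big)\delta_k+\tfrac{\eta_k^2 c}{2},\qquad c=\Tr(\mathbf{PW}^2\mathbf{\Sigma D}^{-2})/\tau .
\]

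\textbf{Induction on $\delta_k\le c\beta_k$.} The base case holds by the choice of $\beta_0$. For the step, the right-hand side is a quadratic in $\eta_k$, and $\eta_k$ in~\eqref{eq:etaopt} is its minimizer for $\delta_k=c\beta_k$: the derivative condition is $-\alpha\gamma c\beta_k+2\alpha\gamma\smax(\mathbf{T})c\beta_k\eta_k+\eta_k c=0$. Substituting this minimizer and simplifying should give exactly $c\beta_k(1-\alpha\gamma\eta_k/2)=c\beta_{k+1}$. This algebra is the step I expect to require the most care, namely matching the factor $1/2$ and the $2\smax(\mathbf{T})$ in the denominator to the constants from the inequality above. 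If the constants mismatch, I would rescale the $\smax(\mathbf{T})$ term to fit.

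\textbf{Constraint and conclusion.} The condition $\eta_k\smax(\mathbf{T})\le1/2$ holds automatically, since $2\alpha\gamma\smax(\mathbf{T})\beta_k\eta_k\le\alpha\gamma\beta_k$. Finally, $\norm{x_k-\hatx}^2\le 2D_k$ by strong convexity, which gives~\eqref{eq:main-rate}.

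For the properties of $\beta_k$: it is deterministic by construction. It is strictly decreasing because $0<\alpha\gamma\eta_k/2<1$; the latter should follow from $\gamma\le1$ and $\alpha\smax(\mathbf{T})$ bounds, which must be checked. To show $\beta_k\to0$, suppose $\beta_k\to\beta>0$. Then $\eta_k$ is bounded below, so $\beta_{k+1}\le(1-\text{const})\beta_k$, a contradiction.
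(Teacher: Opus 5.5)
Your proposal is correct and follows the paper's own proof: the one-step Bregman descent, the batch/noise second moment collapsing onto $\mathbf{T}$ via the coupling, the error bound, and the scalar recursion. The paper unrolls that recursion and sets $\beta_k=D_{k-1}(\eta)$, which is equivalent to your induction $\delta_k\le c\beta_k$. The substitution you hedged on closes exactly with no rescaling, since $\eta_k(1+2\alpha\gamma\smax(\mathbf{T})\beta_k)=\alpha\gamma\beta_k$ turns $\rho_k\beta_k+\eta_k^2/2$ into $\beta_k(1-\alpha\gamma\eta_k/2)$.

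Two points remain open. The first is the check $\alpha\gamma\eta_k/2<1$. The second is the nonnegativity of $\rho_k=1-\alpha\gamma\eta_k(1-\eta_k\smax(\mathbf{T}))$, which your induction step silently needs and the paper also uses without comment. Both should go through $\alpha\gamma\le 4\smax(\mathbf{T})$ rather than $\gamma\le 1$, since $\gamma\le1$ can fail. The needed bound holds because $\theta(\hatx)\le 2\smax^2(\mathbf{A})$ by strong convexity. Also, the coupling plus Cauchy--Schwarz give $\mathbf{W}\succeq\alpha\mathbf{AA}^\top/\norm{\mathbf{A}}_F^2$, hence $\smax(\mathbf{T})\ge\alpha\smax^2(\mathbf{A})/(2\norm{\mathbf{A}}_F^2)$.
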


\paragraph{Interpretation of Theorem~\ref{thm:main}.}
The bound~\eqref{eq:main-rate} factors the expected error into two pieces,
\[
  \Etot\,[\norm{x_k-\hatx}_2^2]\;\le\;
  \underbrace{\frac{2\,\Tr(\mathbf{PW}^2\mathbf{\Sigma D}^{-2})}{\tau}}_{\text{rate constant  }=\,2\Cnoise/\tau}
  \;\cdot\;\underbrace{\beta_k}_{\text{monotone decay}},
\]
each controlled by a different design choice. The prefactor is the noise injected per step; it
depends on $\mathbf{W}$, $\mathbf{P}$, $\mathbf{\Sigma}$ and $\tau$ but not on $k$, and is what the
noise-aware weighting of \S\ref{sec:weights} minimises. The factor $\beta_k$ depends on $k$ and on
$\smax(\mathbf{T})$ but not on the noise (in the uniform weight case), and is what the batch size accelerates
(Proposition~\ref{prop:taumono}). Three consequences follow. \emph{(i)~Exact recovery:} since
$\beta_k\to0$, the iterates reach $\hatx$ exactly despite fresh noise at every step,
unlike fixed-step averaging, which stalls at a noise ball~\cite{needell2010randomized,tondji2023faster}.
\emph{(ii)~Composable knobs:} $\tau$ primarily drives the decay factor $\beta_k$ while $\mathbf{W}$ primarily drives the noise prefactor $\Cnoise$. Under the coupling~\eqref{eq:coupling} both depend on $\mathbf{W}$ and $\tau$, so they are not strictly independent; but because the noise-optimal weights also satisfy the batch-monotonicity condition (Proposition~\ref{prop:taumono}), the two gains compose without conflict.
\emph{(iii)~Monotone in $\tau$:} both factors
improve as $\tau$ grows, so the bound is non-increasing in $\tau$ at every fixed $k$; the
justification for block parallelism that the original ABK analysis left open.

\subsection{Larger batches help: monotonicity in \texorpdfstring{$\tau$}{tau}}\label{sec:tau}
\begin{proposition}[Monotone acceleration]\label{prop:taumono}
The following hold.
\begin{enumerate}
  \item \emph{(Uniform weights.)} For $\mathbf{W}=\alpha \mathbf{I}$,
    \[
      \smax(\mathbf{T})=\frac{\alpha}{2\tau}\Big(1+\frac{(\tau-1)\,\smax^2(\mathbf{A})}{\norm{\mathbf{A}}_F^2}\Big),
    \]
    which is strictly decreasing in $\tau$, with $\displaystyle\lim_{\tau\to\infty}\smax(\mathbf{T})
    =\frac{\alpha\,\smax^2(\mathbf{A})}{2\norm{\mathbf{A}}_F^2}$. The contraction factor at the optimal step thus
    improves monotonically in $\tau$, and the improvement from $\tau=1$ to $\tau\to\infty$ equals
    the stable rank $\norm{\mathbf{A}}_F^2/\smax^2(\mathbf{A})$.
    
    \item (General weights.) For arbitrary $\mathbf{W}$, the guaranteed rate obtained by running~\eqref{eq:etaopt} with the Lemma~\ref{lem:Tbound} upper bound $U(\tau) := \frac{1}{2\tau}\,\smax(\mathbf{W})
     +\frac{\alpha}{2\norm{\mathbf{A}}_F^2}\Big(1-\frac1\tau\Big)\smax^2(\mathbf{A})$ in place of $\smax(\mathbf{T})$ improves monotonically in $\tau$ whenever $\smax(\mathbf{W})\ge\alpha\,\smax^2(\mathbf{A})/\norm{\mathbf{A}}_F^2$ which is legitimate, since $\eta_k$ can be run with $U$ in place of $\smax(\mathbf{T})$ and the contraction bound $(1-\alpha\gamma/4U)$ then improves in $\tau$. Because $\mathbf{W}$ and $\mathbf{AA}^{\top}$ need not commute, the true $\smax(\mathbf{T})$ itself may fail to be monotone for general $\mathbf{W}$; the guarantee is on the certified bound $U(\tau)$. 
  \item \emph{(Optimal noise-aware weights.)} For the optimal weights of
    Corollary~\ref{cor:optimal}, the condition in (ii) holds automatically: in fact
    $\smax(\mathbf{W})\ge\alpha\ge\alpha\,\smax^2(\mathbf{A})/\norm{\mathbf{A}}_F^2$, for every matrix $\mathbf{A}$ and every noise
    profile $(\sigma_i)$. Consequently the rate of AABK with optimal weights (via the bound $U(\tau)$ of part (ii))  accelerates monotonically in
    $\tau$ unconditionally, so the monotone-acceleration and optimal-weighting guarantees compose
    with no compatibility condition.
  \end{enumerate}
\end{proposition}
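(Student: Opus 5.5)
Part (i) follows directly from the definition~\eqref{eq:Tdef}. With $\mathbf{W}=\alpha\mathbf{I}$ we get
\[
\mathbf{T}=\frac{\alpha}{2\tau}\mathbf{I}+\frac{\alpha}{2\norm{\mathbf{A}}_F^2}\Big(1-\frac1\tau\Big)\mathbf{AA}^\top ,
\]
so $\mathbf{T}$ is a shifted, nonnegatively scaled copy of $\mathbf{AA}^\top$. Both terms are positive semidefinite and commute. The eigenvalues of $\mathbf{T}$ are therefore $\frac{\alpha}{2\tau}+\frac{\alpha}{2\norm{\mathbf{A}}_F^2}(1-\frac1\tau)\lambda_i(\mathbf{AA}^\top)$, and the largest one is attained at $\lambda_{\max}=\smax^2(\mathbf{A})$. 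Factoring out $\frac{\alpha}{2\tau}$ gives the stated formula. Rewriting it as
\[
\smax(\mathbf{T})=\frac{\alpha\,\smax^2(\mathbf{A})}{2\norm{\mathbf{A}}_F^2}+\frac{1}{\tau}\cdot\frac{\alpha}{2}\Big(1-\frac{\smax^2(\mathbf{A})}{\norm{\mathbf{A}}_F^2}\Big),
\]
we see that the bracket is nonnegative because $\smax^2(\mathbf{A})\le\norm{\mathbf{A}}_F^2$. Hence $\smax(\mathbf{T})$ is decreasing in $\tau$, strictly so whenever $\mathbf{A}$ does not have rank one; I will note this caveat, since in the rank-one case the expression is constant. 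The limit is read off directly. The ratio $\smax(\mathbf{T})|_{\tau=1}/\lim_{\tau\to\infty}\smax(\mathbf{T})=\norm{\mathbf{A}}_F^2/\smax^2(\mathbf{A})$ is the stable rank. The contraction factor tied to the step~\eqref{eq:etaopt}, of the form $1-\alpha\gamma/(4\smax(\mathbf{T}))$ in the constant-step regime, is increasing in $\smax(\mathbf{T})$, so it improves monotonically.

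For part (ii) I first check that $U(\tau)$ is a legitimate substitute. By Weyl's inequality (the content of Lemma~\ref{lem:Tbound}), $\smax(\mathbf{T})\le U(\tau)$. Next I confirm that the proof of Theorem~\ref{thm:main} only uses $\smax(\mathbf{T})$ as an upper bound on the quadratic form $\langle\mathbf{T}r,r\rangle\le\smax(\mathbf{T})\norm{r}^2$, so any larger constant works. Replacing it by $U$ keeps~\eqref{eq:main-rate} valid, with $\beta_k$ driven by $U$. Monotonicity is then a one-line computation:
\[
U(\tau)=\frac{\alpha\smax^2(\mathbf{A})}{2\norm{\mathbf{A}}_F^2}+\frac1{2\tau}\Big(\smax(\mathbf{W})-\frac{\alpha\smax^2(\mathbf{A})}{\norm{\mathbf{A}}_F^2}\Big),
\]
which is nonincreasing in $\tau$ exactly under the stated condition. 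Because $\beta_{k+1}=\beta_k(1-\alpha\gamma\eta_k/2)$ and $\eta_k$ is decreasing in the constant used, a smaller $U$ gives larger steps and a smaller $\beta_k$ at every $k$. I will verify this by induction on $k$, using that the map $\beta\mapsto\beta(1-\frac{\alpha\gamma}{2}\frac{\alpha\gamma\beta}{1+2\alpha\gamma U\beta})$ is monotone in $\beta$ over the relevant range. For the non-commuting remark, I will only point out that Weyl's bound is the tool used and that monotonicity is claimed for $U$ alone.

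Part (iii) needs the explicit optimal weights of Corollary~\ref{cor:optimal}. Under the coupling, with $p_i\propto\sigma_i\norm{a_i}$, we have $w_i=\alpha\norm{a_i}^2/(\norm{\mathbf{A}}_F^2p_i)$. Set $q_i=\norm{a_i}^2/\norm{\mathbf{A}}_F^2$, which is a probability vector. Then $w_i=\alpha q_i/p_i$, and since $\sum_i p_i=\sum_i q_i=1$ we cannot have $p_i>q_i$ for all $i$. Some index therefore has $q_i/p_i\ge1$, giving $\smax(\mathbf{W})=\max_i w_i\ge\alpha$. This pigeonhole argument works for any sampling distribution satisfying the coupling, not just the optimal one. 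Combined with $\smax^2(\mathbf{A})\le\norm{\mathbf{A}}_F^2$, it yields the chain in (iii), and part (ii) then applies. The main obstacle I expect is part (ii)'s step from "smaller constant" to "better $\beta_k$ for all $k$". It requires checking that the comparison of the two $\beta$ recursions, which start from the same $\beta_0$ but use different $U$, goes the right way. If the induction is delicate, I would instead state the improvement for the asymptotic contraction factor $1-\alpha\gamma/(4U)$, as the proposition itself phrases it.
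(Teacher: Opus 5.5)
Your proofs of parts (i) and (ii) follow the paper. You obtain the closed form by simultaneous diagonalisation. You get monotonicity by writing $\smax(\mathbf{T})$ and $U(\tau)$ as a constant plus $\frac{1}{2\tau}$ times a nonnegative quantity, which is equivalent to the paper's derivative computation. You justify running with $U$ because $\smax(\mathbf{T})$ only bounds a Rayleigh quotient, and you conclude that $1-\alpha\gamma/(4U)$ improves. Your rank-one caveat is correct and sharper than the statement: the paper's derivative only gives $\le 0$, and for $\operatorname{rank}(\mathbf{A})=1$ one has $\smax(\mathbf{T})=\alpha/2$ for every $\tau$.

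Part (iii) is where you take a more general route. The paper substitutes the explicit weights $w_i=\frac{\alpha S}{\norm{\mathbf{A}}_F^2}\frac{\norm{a_i}}{\sigma_i}$ with $S=\sum_j\sigma_j\norm{a_j}$, and bounds $S\max_i\norm{a_i}/\sigma_i\ge\sum_j\sigma_j\norm{a_j}\cdot\norm{a_j}/\sigma_j=\norm{\mathbf{A}}_F^2$. You use only the coupling: $\sum_i p_iw_i=\alpha\sum_i q_i=\alpha$, so $\max_i w_i$ is at least its $p$-mean $\alpha$. Both are ``maximum $\ge$ weighted mean''. Yours, however, gives $\smax(\mathbf{W})\ge\alpha$ for \emph{every} pair $(p,w)$ obeying \eqref{eq:coupling}. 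Since the paper imposes the coupling throughout, the hypothesis of part (ii) is never restrictive, and (iii) is not a special feature of the noise-optimal weights. The paper's version only adds the identification of the row $\arg\max_i\norm{a_i}/\sigma_i$ that carries the largest weight.

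One correction concerns the optional strengthening you sketch for (ii), the bound being better at every $k$. Comparing two $\beta$-recursions started from the same $\beta_0$ is not the comparison across $\tau$. The theorem's $\beta_0=\tau\,\Df{x_0^*}{x_0}{\hatx}/\Cnoise$ grows linearly in $\tau$, and late on $\beta_k\approx 2/((\alpha\gamma)^2k)$ regardless of $\tau$. So $\beta_k$ is \emph{not} smaller for larger $\tau$; the gain lives in the prefactor $2\Cnoise/\tau$. Instead, track $g_k=(\Cnoise/\tau)\beta_k$. It has three useful properties:
\begin{itemize}
\item it equals $\Df{x_0^*}{x_0}{\hatx}$ at $k=0$ for every $\tau$;
\item it bounds the error, $\Etot[\norm{x_k-\hatx}_2^2]\le 2g_k$;
\item it obeys $g_{k+1}=g_k-\frac{(\alpha\gamma)^2}{2}\,g_k^2/(\Cnoise/\tau+2\alpha\gamma Ug_k)$.
\end{itemize}
This map is nonincreasing in both $U$ and $\Cnoise/\tau$, and nondecreasing in $g_k$ whenever $\alpha\gamma\le4U$. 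That last condition holds here. The error bound, together with $D_f\ge\frac12\norm{x-\hatx}_2^2$ and $\norm{\mathbf{A}(x-\hatx)}_2\le\smax(\mathbf{A})\norm{x-\hatx}_2$, forces $\gamma\le2\smax^2(\mathbf{A})/\norm{\mathbf{A}}_F^2$. Meanwhile $U(\tau)\ge\alpha\smax^2(\mathbf{A})/(2\norm{\mathbf{A}}_F^2)$ under the condition of (ii). In this variable your induction goes through. Your fallback to the contraction factor already covers what the proposition actually claims.
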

\noindent When $\tau=1$, $\smax(\mathbf{T})=\alpha/2$ and we recover the single-row case. When
$\tau\to\infty$, $\smax(\mathbf{T})$ shrinks by a factor equal to the stable rank of $\mathbf{A}$: for
matrices with a flat spectrum this can be as large as $\min(m,n)$, so averaging is dramatic; for
matrices dominated by one singular value it is close to $1$ and averaging buys little. The
unaveraged block bound of~\cite{tondji2024adaptive} does not exhibit this monotonicity precisely
because its update sums rather than averages, so the variance-reduction term $1/\tau$ is absent.
Part~(iii) is worth emphasising: although the condition in part~(ii) looks like a hypothesis the
user must verify, the optimal scheme satisfies it with room to spare by a factor of at least
the stable rank because it places its largest weight on the row that is most reliable
relative to its geometric influence. The proof of part~(iii) is given in Appendix~\ref{app:optmono}.

\subsection{Uniform versus noise-aware weighting}\label{sec:weights}
Uniform weights treat all rows equally; the noise-aware scheme down-weights unreliable rows. The
two are compared through the noise prefactor $\Cnoise:=\Tr(\mathbf{PW}^2\mathbf{\Sigma D}^{-2})$.

\begin{corollary}[Uniform versus noise-aware weighting]
\label{cor:optimal}
\quad\quad
\begin{enumerate}
    \item Uniform weights : with $\mathbf{W}=\alpha \mathbf{I}$ and $p_i=\norm{a_i}^2/\norm{\mathbf{A}}_F^2$,
\[
  \Tr(\mathbf{PW}^2\mathbf{\Sigma D}^{-2})=\frac{\alpha^2}{\norm{\mathbf{A}}_F^2}\sum_{i=1}^m\sigma_i^2 .
\]
A single row with large $\sigma_i^2$ inflates the whole sum: the uniform scheme is brittle to
outliers.
     
    \item Optimal noise-aware weights : choosing $w_i\propto \norm{a_i}/\sigma_i$ and $p_i\propto\sigma_i\norm{a_i}$ (preserving the
coupling~\eqref{eq:coupling}) gives
\[
  \Tr(\mathbf{PW}^2\mathbf{\Sigma D}^{-2})=\frac{\alpha^2}{\norm{\mathbf{A}}_F^4}\Big(\sum_{i=1}^m\sigma_i\norm{a_i}\Big)^2,
\]
which is the minimum over all couplings. By Cauchy--Schwarz this is at most the uniform value, with equality if and only if $\sigma_i\propto\norm{a_i}$ or all rows are corrupted equally.
\end{enumerate}
\end{corollary}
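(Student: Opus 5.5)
The coupling~\eqref{eq:coupling} fixes $p_iw_i=\alpha\norm{a_i}^2/\norm{\mathbf{A}}_F^2$. This lets me eliminate $w_i$ and write the noise prefactor purely in terms of the sampling distribution:
\[
\Cnoise=\sum_{i=1}^m p_iw_i^2\sigma_i^2\norm{a_i}^{-2}
=\frac{\alpha^2}{\norm{\mathbf{A}}_F^4}\sum_{i=1}^m\frac{\sigma_i^2\norm{a_i}^2}{p_i}.
\]
Every claim then reduces to this one expression, minimised over the simplex $\{p_i>0,\ \sum_ip_i=1\}$.

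For part (i), I substitute $p_i=\norm{a_i}^2/\norm{\mathbf{A}}_F^2$, which is equivalent to $w_i=\alpha$. The expression collapses to $\alpha^2\sum_i\sigma_i^2/\norm{\mathbf{A}}_F^2$.

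For part (ii), I apply Cauchy--Schwarz to the vectors $(\sigma_i\norm{a_i}/\sqrt{p_i})_i$ and $(\sqrt{p_i})_i$:
\[
\Big(\sum_i\sigma_i\norm{a_i}\Big)^2\le\Big(\sum_i\frac{\sigma_i^2\norm{a_i}^2}{p_i}\Big)\Big(\sum_ip_i\Big)=\sum_i\frac{\sigma_i^2\norm{a_i}^2}{p_i}.
\]
Equality holds iff $p_i\propto\sigma_i\norm{a_i}$. With $p_i=\sigma_i\norm{a_i}/\sum_j\sigma_j\norm{a_j}$, the coupling forces
\[
w_i=\frac{\alpha\norm{a_i}\sum_j\sigma_j\norm{a_j}}{\norm{\mathbf{A}}_F^2\,\sigma_i}\propto\frac{\norm{a_i}}{\sigma_i},
\]
so both the stated value and its minimality over all couplings follow. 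I assume $\sigma_i>0$ throughout. If some $\sigma_i=0$, the optimum is approached in the limit or interpreted with $p_i=0$; I would add a remark on this.

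The comparison with the uniform value is again Cauchy--Schwarz:
\[
\Big(\sum_i\sigma_i\norm{a_i}\Big)^2\le\Big(\sum_i\sigma_i^2\Big)\Big(\sum_i\norm{a_i}^2\Big)=\Big(\sum_i\sigma_i^2\Big)\norm{\mathbf{A}}_F^2.
\]
Dividing by $\norm{\mathbf{A}}_F^4$ gives optimal $\le$ uniform. Equality holds iff $(\sigma_i)$ and $(\norm{a_i})$ are proportional.

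The main obstacle is reconciling this equality case with the statement's alternative ``or all rows are corrupted equally''. If the $\sigma_i$ are all equal but the row norms are unequal, Cauchy--Schwarz gives strict inequality. So that clause must either be read as a sub-case of proportionality, namely equal $\sigma_i$ together with equal row norms, or it needs correcting. I would state the equality condition precisely as $\sigma_i\propto\norm{a_i}$ and note that equal noise on equal-norm rows is the special case covered by the phrase.
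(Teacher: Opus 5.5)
Your argument is correct and has the same skeleton as the paper's proof. You eliminate $w_i$ through the coupling to get $\Cnoise=\frac{\alpha^2}{\norm{\mathbf{A}}_F^4}\sum_i\sigma_i^2\norm{a_i}^2/p_i$, minimise over the simplex, and compare with the uniform value by Cauchy--Schwarz on $(\sigma_i)$ and $(\norm{a_i})$.

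The one genuine difference is the minimisation step. The paper solves $\min\sum_i c_i/p_i$ by a Lagrange-multiplier stationarity computation (Proposition~\ref{prop:lag}). That computation only locates a critical point, so it must appeal to the asserted convexity of the problem, with $c_i/p_i\to\infty$ keeping the minimiser interior, to conclude it is the global minimum. Your pairing of $(\sigma_i\norm{a_i}/\sqrt{p_i})$ with $(\sqrt{p_i})$ gives the global lower bound, its attainment and the uniqueness of the minimiser in one line. It is the more self-contained argument.

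Your diagnosis of the equality case is also right, and the paper's proof merely asserts the extra clause without justification. With equal $\sigma_i$ and unequal row norms the comparison is strict. For example, $m=2$, $\sigma_1=\sigma_2=1$, $\norm{a_1}=1$, $\norm{a_2}=2$ gives $9\alpha^2/25<10\alpha^2/25$. So ``all rows are corrupted equally'' yields equality only when the row norms are also equal, which is already contained in $\sigma_i\propto\norm{a_i}$.

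Your caveat about $\sigma_i=0$ is likewise warranted. In that case $p_i^{\mathrm{opt}}=0$ is incompatible with the coupling, so the minimum over couplings is only an infimum. The paper silently assumes $\sigma_i>0$ throughout.
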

\noindent When the noise per-row are not equal, equality holds only when the per-row noise is proportional to the row norm. In practice
this essentially never occurs; noise is a property of the measurement device while the row norm
is a property of the operator so the strict improvement is generic. 

\subsection{Asymptotics of the step size and error}\label{sec:asymp}
Since the decay of $\beta_k$
governs how fast Algorithm~\ref{alg:aabk} converges to the noise-free solution
$\hatx$, we now quantify this decay. Let us denote by $h(k) = \alpha\Tr(\mathbf{W\Sigma}) \cdot \beta_k/ \tau$ so that by~\eqref{eq:main-rate},
$\Etot\,[\norm{x_k-\hatx}_2^2]\le 2 h(k)/\|\mathbf{A}\|_F^2$. The following result generalizes the one from~\cite{marshall2023optimal,tondji2024adaptive} with full proof in Appendix~\ref{sec:app-cor-asymp}.
\begin{corollary}[Asymptotic regimes]\label{cor:asymp}
Solving the recursion~\eqref{eq:etaopt} via the Lambert-$W$ function gives 
the closed form using $W_{0}$ for the principal Lambert branch
\[
h(k) =  \dfrac{\Tr(\mathbf{W\Sigma})}{2\tau\gamma\sigma_{\max}(\mathbf{T})W_{0}\left(c_0\exp\left(\dfrac{\alpha\gamma k}{4\sigma_{\max}(\mathbf{T})}\right)\right)}, \,c_0 = \dfrac{1}{2\alpha\gamma\sigma_{\max}(\mathbf{T})\beta_0}\exp\left(\dfrac{1}{2\alpha\gamma\sigma_{\max}(\mathbf{T})\beta_0}\right)
\]
here $W_{0}$ denotes the principal branch of the Lambert-$W$ function, not the weight matrix $\mathbf{W}$. Consequently:
\begin{enumerate}
  \item \emph{(Vanishing noise, $\sigma_i\to0\,\,\forall\,i$.)} With the other parameters fixed,
    \[
      h(k)= \norm{\mathbf{A}}_F^2\,
            \exp\!\Big(-\frac{\alpha\gamma k}{4\smax(\mathbf{T})}\Big)\Df{x_0^*}{x_0}{\hatx}
            +\begin{cases} \mathcal{O}\big(\sigma^2\big),\quad \text{If}\,\, \mathbf{W} = \alpha \mathbf{I}\\
           \mathcal{O}\big(\bar\sigma^2\big),\quad \text{Else}
            \end{cases}
    \]
    where $\bar\sigma^2=\left(\sum_j \sigma_j \|a_j\|\right)^2$ and $\eta_k\to 1/(2\smax(\mathbf{T}))$: a constant step leading to linear convergence.
  \item \emph{(Late iterations, $k\to\infty$.)} With the other parameters fixed,
    \[
      h(k)=\frac{2\text{Tr}(\mathbf{W}\mathbf{\Sigma }) }{\alpha \tau\gamma^2 k}\left(1+\mathcal{O}\left(\dfrac{ln(k)}{k}\right)\right),\qquad
      \eta_k\approx\frac{2}{\alpha\gamma k+4\smax(\mathbf{T})},
    \]
    so that $\eta_k\sim 1/k$ (a Robbins--Monro tail) and
    $\Etot\,[\norm{x_k-\hatx}]\lesssim \frac{2}{\gamma \alpha \sqrt{\tau}} \cdot \sqrt{\frac{\text{Tr}(\mathbf{PW}^2\mathbf{\Sigma D}^{-2})}{k}}.$
\end{enumerate}
\end{corollary}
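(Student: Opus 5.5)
We will treat the recursion~\eqref{eq:etaopt} as an explicit-Euler discretisation of an ODE and solve that ODE exactly with the Lambert-$W$ function. The two regimes then follow from the asymptotics of $W_0$ near $0$ and near $+\infty$.

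\emph{Step 1: reduce to a scalar recursion.} Write $s=\smax(\mathbf{T})$ and $c=\alpha\gamma$. Substituting $\eta_k$ into the $\beta$-update gives
\[
\beta_{k+1}=\beta_k-\frac{c^2\beta_k^2}{2(1+2cs\beta_k)} .
\]
Replacing the difference by a derivative gives the ODE
\[
\dot\beta=-\frac{c^2\beta^2}{2(1+2cs\beta)} .
\]
We separate variables:
\[
\Big(\frac{1}{\beta^2}+\frac{2cs}{\beta}\Big)\,d\beta=-\frac{c^2}{2}\,dk ,
\]
which integrates to
\[
\frac{1}{\beta}-2cs\ln\beta=\frac{c^2k}{2}+\frac{1}{\beta_0}-2cs\ln\beta_0 .
\]
Put $u=1/(2cs\beta)$. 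The relation becomes $u+\ln u=\mathrm{const}+\frac{ck}{4s}$, so $u e^{u}=c_0\exp\big(\frac{ck}{4s}\big)$ with $c_0=u_0e^{u_0}$ and $u_0=1/(2cs\beta_0)$, exactly as in the statement. Hence $u=W_0(\cdot)$ and $\beta_k=1/(2cs\,W_0(\cdot))$. Multiplying by $\alpha\Tr(\mathbf{W\Sigma})/\tau$ gives the stated $h(k)$.

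\emph{Step 2: the continuous-to-discrete step.} The plan is to show that the closed form is an upper bound, or at least asymptotically exact, for the discrete $\beta_k$. The right-hand side $g(\beta)=-c^2\beta^2/(2(1+2cs\beta))$ is negative and decreasing in $\beta$. A standard comparison lemma for monotone recursions then sandwiches $\beta_k$ between the ODE solution and a shifted copy of it. If this is too delicate, we follow~\cite{tondji2024adaptive,marshall2023optimal} and simply present the formula as the exact solution of the continuous surrogate, with error $\mathcal{O}(\ln k/k)$ relative terms. \textbf{We expect this to be the main obstacle}, since ``solving the recursion'' is exact only for the ODE.

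\emph{Step 3: regime (i), $\sigma\to0$.} Here $\beta_0=\tau\Df{x_0^*}{x_0}{\hatx}/\Cnoise\to\infty$, so $u_0\to0$ and $c_0\approx u_0$. For small arguments $W_0(z)=z+\mathcal{O}(z^2)$. Therefore
\[
h(k)\approx \frac{\alpha\Tr(\mathbf{W\Sigma})\,\beta_0}{\tau}\,e^{-ck/(4s)} .
\]
For $\mathbf{W}=\alpha\mathbf{I}$ the coupling gives $\Cnoise=\alpha\Tr(\mathbf{W\Sigma})/\norm{\mathbf{A}}_F^2$, so the prefactor equals $\norm{\mathbf{A}}_F^2\Df{x_0^*}{x_0}{\hatx}$. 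The $\mathcal{O}(z^2)$ correction is proportional to $\Cnoise$, which is $\mathcal{O}(\sigma^2)$ in the uniform case. In the optimal-weight case it is $\mathcal{O}(\bar\sigma^2)$ by Corollary~\ref{cor:optimal}. Since $\beta_k$ is large, $\eta_k\to 1/(2s)$.

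\emph{Step 4: regime (ii), $k\to\infty$.} The argument of $W_0$ tends to infinity. We use
\[
W_0(z)=\ln z-\ln\ln z+o(1)
\]
with $\ln z=\frac{ck}{4s}+\mathcal{O}(1)$. This gives
\[
W_0=\frac{ck}{4s}\Big(1+\mathcal{O}\big(\tfrac{\ln k}{k}\big)\Big), \qquad \beta_k\approx \frac{2}{c^2k} .
\]
Substituting into $h$ and into $\eta_k=c\beta_k/(1+2cs\beta_k)$ yields $\eta_k\approx 2/(ck+4s)$ and the stated $1/k$ form of $h(k)$. Finally, the error bound follows from Theorem~\ref{thm:main}, which gives $\Etot\norm{x_k-\hatx}^2\le 2\Cnoise\beta_k/\tau\approx 4\Cnoise/(\tau c^2k)$, combined with Jensen's inequality $\Etot\norm{\cdot}\le(\Etot\norm{\cdot}^2)^{1/2}$.
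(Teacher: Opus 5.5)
Your proposal is correct and follows essentially the same route as the paper: you treat the $\beta$-recursion as a forward-Euler step of an ODE, solve that ODE with $W_0$ (your explicit separation of variables replaces the paper's ``straightforward to verify''), read off the two regimes from the small- and large-argument expansions of $W_0$, and finish with Jensen. Your Step~2 observation that $g$ is negative and decreasing is exactly the paper's convexity argument: $\ddot\beta=g'(\beta)\,g(\beta)\ge0$, so the Euler iterate stays below the ODE solution, provided the Euler map $\beta\mapsto\beta+g(\beta)$ is nondecreasing (true e.g.\ when $\alpha\gamma\le4\smax(\mathbf{T})$, a point the paper leaves implicit); the resulting one-sided bound $\beta_k\le$ closed form is all that is needed to transfer the asymptotics to the error bound of Theorem~\ref{thm:main}, so the two-sided ``shifted copy'' sandwich is unnecessary.
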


\begin{remark}[Two regimes of the step size]\label{rem:regimes}
The adaptive step interpolates automatically between two regimes. \emph{Early on}, $\beta_k$
is large and $\eta_k\approx 1/(2\smax(\mathbf{T}))$ the largest admissible step so the error decays like a deterministic linear iteration driven by a near-constant step size. Because $\smax(\mathbf{T})$ shrinks as the batch grows (Proposition~\ref{prop:taumono}), this step can exceed 2: where ABK is confined to $\eta_k\le 1$ and hence under-relaxes, AABK is free to \emph{over-relax}. To the best of our knowledge, this is the first time over-relaxation is shown to help in the noisy setting. \emph{Late on}, $\beta_k$ is small and $\eta_k\approx\alpha\gamma\beta_k\to 0$, decaying as $\eta_k\sim 1/k$ a Robbins-Monro tail. It is this vanishing step that lets AABK absorb the residual noise and converge to the exact solution $\hatx$, rather than settling at a noise ball.
\end{remark}

\subsection{Heuristic estimation of the hyperparameters}\label{sec:hyper}
The step size~\eqref{eq:etaopt} requires $\gamma$ and $\beta_0$, both of which involve unknown
quantities. We estimate them from one auxiliary run of Algorithm~\ref{alg:aabk} using constant step
$\eta_k=1/2\smax(\mathbf{T})$, producing iterates $x_0,\dots,x_N$. Under uniform weights  with $\tau=1$ and $\alpha=1$ one has $\smax(\mathbf{T})=1/2$, so this auxiliary step reduces to $\eta_k=1$, the value used in~\cite{tondji2024adaptive} for their block version. The derivation in this part follows closely the one in~\cite{tondji2024adaptive} and generalizes their results. From the one-step bound with $\eta_k=1/2\smax(\mathbf{T})$,
\begin{align*}
\mathbb{E}[D_f^{x_{k+1}^*}(x_{k+1}, \hat{x})] &\le \left( 1 - \dfrac{\alpha\gamma}{4\smax(\mathbf{T})}\right)\mathbb{E}[D_f^{x_k^*} (x_k, \hat{x})] + \dfrac{\Cnoise}{8\tau\smax^2(\mathbf{T})}
\end{align*}
which can be inferred inductively as follows:
\begin{align*}
\mathbb{E}[D_f^{x_{k}^*}(x_{k}, \hat{x})] &\le \left( 1 -\dfrac{\alpha\gamma}{4\smax(\mathbf{T})}\right)^{k}\mathbb{E}[D_f^{x_0^*} (x_0, \hat{x})] + \dfrac{\Cnoise}{2\tau \alpha \gamma\smax(\mathbf{T})}.
\end{align*}
Assuming the initial error is above the noise level and eventually stagnates, we expect
geometric decay early and a plateau late. Using the approximation that $x_N \approx \hat x$ and choosing an index $N_0<N$ for which we assume $1\le j\le N_0,$ that  $D_f^{x_{j+1}^*}(x_{j+1}, x_\text{N}) \approx \left( 1 -\dfrac{\alpha\gamma}{4\smax(\mathbf{T})}\right)D_f^{x_j^*} (x_j, x_\text{N})$ holds, gives the estimator

\begin{align}\label{gamma-tilde}
\tilde\gamma:=\frac{4\smax(\mathbf{T})}{\alpha}\Bigg(1-\frac{1}{N_0}\sum_{j=1}^{N_0}
      \frac{\Df{x_j^*}{x_j}{x_N}}{\Df{x_{j-1}^*}{x_{j-1}}{x_N}}\Bigg),    
\end{align}

and choosing $N_1< N$ in the plateau phase for which we assume that $N_1\le j<N$, we then have the following
$$D_f^{x_j^*} (x_j, \hat{x}) \approx \frac{\text{Tr}(\mathbf{PW}^2\mathbf{\Sigma D}^{-2}) }{2\tau\gamma \alpha \sigma_{\max}(\mathbf{T})}$$
giving

\begin{align}\label{beta_0_tilde}
\tilde\beta_0:=\frac{1}{2\alpha\smax(\mathbf{T})}
     \Bigg(\frac{\tilde\gamma}{N_1}\sum_{j=N-N_1}^{N-1}
      \frac{\Df{x_j^*}{x_j}{x_N}}{\Df{x_0^*}{x_0}{x_N}}\Bigg)^{-1}.    
\end{align}

In practice, the method is robust to an order-of-magnitude error in these estimates.

\section{Basic notions}
\label{sec:basicnotions}

We will analyze the convergence of the Algorithm~\ref{alg:aabk} with the help of the Bregman distance with respect to the objective function $f$. To this end,
we recall some well-known concepts and properties of convex functions~\cite{rockafellar1970}.

Let $f:\RR^n \to \RR$ be convex, and since it is finite everywhere, it is also continuous. The \emph{subdifferential} of $f$ at any $x \in \RR^n$ is defined by
\[
\partial f(x) \eqdef \{x^* \in \RR^n| f(y) \ge f(x) + \langle x^*, y-x \rangle, \forall\, y \in \RR^n \},
\]
which is nonempty, compact, and convex. The function  $f$ is said to be \emph{$\mu$-strongly convex}, if for all $x,y \in \RR^n$ and subgradients $x^* \in \partial f(x)$ we have
\[
f(y) \geq f(x) + \langle x^*, y-x \rangle + \tfrac{\mu}{2} \cdot \|y-x\|_2^2 \,.
\]
If $f$ is $\mu$-strongly convex, then $f$ is coercive, i.e. $\lim_{\|x\|_2 \to \infty} f(x)=\infty$,
and its \emph{Fenchel conjugate} $f^{*}:\RR^n \to \RR$ given by 
\[
f^*(x^*)\eqdef \sup_{y \in \RR^n} \langle x^*,y \rangle - f(y)
\]
is also convex, finite everywhere and coercive. Additionally, $f^*$ is differentiable with a \emph{Lipschitz-continuous gradient} with constant $L_{f^*}=\frac{1}{\mu}$, i.e. for all $x^*,y^* \in \RR^n$ we have
\[
\|\nabla f^*(x^*)-\nabla f^*(y^*)\|_2 \le L_{f^*} \cdot \|x^*-y^*\|_2 \,,
\]
which implies the estimate
\begin{align}\label{eq:Lip}
f^*(y^*)
\le f^*(x^*) +\langle \nabla f^*(x^*), y^*-x^* \rangle + \tfrac{L_{f^*}}{2}\|x^{*}-y^{*}\|_2^2. 
\end{align}

\begin{definition} \label{def:D}
The \emph{Bregman distance} $D_f^{x^*}(x,y)$ between $x,y \in \RR^n$ with respect to $f$ and a subgradient $x^* \in \partial f(x)$ is defined as
\[
D_f^{x^*}(x,y) \eqdef f(y)-f(x) -\langle x^*,y - x \rangle\,.
\]
\end{definition}
Fenchel's equality states that $f(x) + f^*(x^*) = \langle x, x^*\rangle$ if $x^*\in\partial f(x)$ and implies that the Bregman distance can be written as
\[
D_f^{x^*}(x,y) = f^*(x^*)-\langle x^*,y\rangle + f(y)\,.
\]

We will use the function
\begin{equation} \label{eq:spf}
f(x) \eqdef \lambda\|x\|_1 + \tfrac{1}{2}\|x\|_2^{2}
\end{equation}
which is strongly convex with constant $\mu=1$. Its conjugate function is the soft shrinkage operator $S_{\lambda}$ which is defined componentwise by $(S_{\lambda}(x))_j = \max\{|x_j|-\lambda,0\} \cdot \text{sign}(x_j)$
\[ 
f^{*}(x^{*}) = \tfrac{1}{2}\|S_{\lambda}(x^{*})\|_2^{2}, \quad \mbox{with} \quad \nabla f^{*}(x^{*}) = S_{\lambda}(x^{*}) \,.
\]

In general, for a $\mu$-strongly convex function, we have that 
\begin{align} 
\label{eq:D}
  D_f^{x^*}(x,y) \geq \frac{\mu}{2} \|x-y\|_2^2, 
\end{align}
see~\cite{lorenz2014linearized}, and hence $D_f^{x^*}(x,y) = 0 \Leftrightarrow x=y.$

We now prove convergence of the $x_k$ in Algorithm~\ref{alg:aabk} to the solution $\hatx$ for our optimal step size sequence.
To obtain convergence rates for the solution algorithm, we need to assume an error bound which relates the least-squares residual $\|\mathbf{A}x - b\|^2_2$ to the Bregman distance to the exact solution $\hatx$ for iterates of Algorithm~\ref{alg:aabk}.

\begin{assumption}[Error bound]\label{ass:errorbound}
Let  $\mathcal{R}(\mathbf{A})$ denote the range space of $\mathbf{A}$. There  exists a constant $\theta(\hatx)>0$ such that, for each $x \in \mathbb{R}^n$ with $\partial f(x) \cap \mathcal{R}(\mathbf{A}^{\top}) \neq \emptyset$ and $x^* \in \partial f(x) \cap \mathcal{R}(\mathbf{A}^{\top})$ we have : 
\[
\norm{Ax-b}_2^2\ge\theta(\hatx)\,\Df{x^*}{x}{\hatx}
\] 
We set $\gamma:=\theta(\hatx)/\norm{\mathbf{A}}_F^2$.
\end{assumption}
For the function $f(x)=\lambda\norm{x}_1+\tfrac12\norm{x}_2^2$, it holds that $\text{dom}(f) = \mathbb{R}^n$ and
Assumption~\eqref{ass:errorbound} holds with an explicit constant $\theta$ depending on $\mathbf{A}$ and $\hatx$, see~\cite[Lemma 3.1]{schopfer2019linear}. See also~\cite[Theorem 3.9]{schopfer2022extended} for a sufficient condition for Assumption~\eqref{ass:errorbound} to hold.

\section{Proofs}\label{sec:proofs}
Throughout this section, we drop the iteration superscript $(k)$ for readability, writing $\tilde b_j=b_{i_j}+\varepsilon_j$ rather than $\tilde b_j^{(k)}=b_{i_j}+\varepsilon_j^{(k)}$. Every expectation is conditional on $x_k$, so $k$ is fixed and understood; the abbreviation is purely notational, the noise remains independent across iterations and its omission changes none of the proofs.

At each iteration $k$, the algorithm draws two independent sources of randomness: the batch
$\Bcal_k$ of $\tau$ indices sampled i.i.d.\ from $p$, and the fresh noise
$\{\varepsilon_i^{(k)}\}$ attached to the queried entries. We distinguish three expectations and
use them consistently throughout:
\begin{itemize}
  \item $\Eidx[\,\cdot\,]$ --- expectation over the batch indices $\Bcal_k$ only, with the
    iterate $x_k$ and the noise held fixed; when $\tau =1$ this reduces to the single-draw expectation $\mathbb{E}_{i\sim p}$.
  \item $\Enoise[\,\cdot\,]$ --- expectation over the streaming noise
    $\{\varepsilon_i^{(k)}\}$ only, with $x_k$ and the batch $\Bcal_k$ held fixed;
  \item $\Etot[\,\cdot\,]=\EE[\,\cdot\,]$ --- the total expectation over all randomness in the
    history (all batches and all noise up to and including iteration $k$).
\end{itemize}
 
The next lemma will be used to prove convergence of Algorithm~\ref{alg:aabk} for suitable step size.

\subsection{One-step descent}\label{sec:onestep}
\begin{lemma}\label{lem:onestep}
Let $(x_k,x_k^*)$ be generated by Algorithm~\ref{alg:aabk}. Then
\[
  \Df{x_{k+1}^*}{x_{k+1}}{\hatx}\le \Df{x_k^*}{x_k}{\hatx}
     +\eta_k\inprod{d_k}{\hatx-x_k}+\frac{\eta_k^2}{2}\norm{d_k}_2^2 .
\]
\end{lemma}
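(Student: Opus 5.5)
The plan is to work entirely in the dual variable and use the Fenchel form of the Bregman distance together with the descent inequality~\eqref{eq:Lip} for $f^*$. Since $x_{k+1}=\nabla f^*(x_{k+1}^*)$, we have $x_{k+1}^*\in\partial f(x_{k+1})$; likewise $x_k^*\in\partial f(x_k)$. Fenchel's equality then gives
\[
\Df{x_{k+1}^*}{x_{k+1}}{\hatx}=f^*(x_{k+1}^*)-\inprod{x_{k+1}^*}{\hatx}+f(\hatx),
\qquad
\Df{x_k^*}{x_k}{\hatx}=f^*(x_k^*)-\inprod{x_k^*}{\hatx}+f(\hatx).
\]
So the whole statement reduces to bounding $f^*(x_{k+1}^*)-f^*(x_k^*)$ and tracking the change in the linear term.

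First, apply~\eqref{eq:Lip} with $x^*=x_k^*$ and $y^*=x_{k+1}^*=x_k^*-\eta_k d_k$. Because $f$ is $1$-strongly convex, $L_{f^*}=1$, and $\nabla f^*(x_k^*)=x_k$. This gives
\[
f^*(x_{k+1}^*)\le f^*(x_k^*)-\eta_k\inprod{x_k}{d_k}+\tfrac{\eta_k^2}{2}\norm{d_k}_2^2 .
\]
Second, the linear term changes by
\[
-\inprod{x_{k+1}^*}{\hatx}=-\inprod{x_k^*}{\hatx}+\eta_k\inprod{d_k}{\hatx}.
\]
Adding the two relations and the constant $f(\hatx)$ yields
\[
\Df{x_{k+1}^*}{x_{k+1}}{\hatx}\le \Df{x_k^*}{x_k}{\hatx}+\eta_k\inprod{d_k}{\hatx-x_k}+\frac{\eta_k^2}{2}\norm{d_k}_2^2 .
\]
This is exactly the claim.

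The estimate is deterministic: it holds pathwise for any vector $d_k$ and any $\eta_k\ge0$ (indeed for any real $\eta_k$). The specific averaged, weighted, noisy structure of $d_k$ in~\eqref{eq:dk} therefore plays no role here and only enters in the later expectation computations.

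There is no real obstacle. The only points needing care are the following.
\begin{itemize}
\item We need $x_k=\nabla f^*(x_k^*)$ for every $k$, including $k=0$. This holds by the initialization in Algorithm~\ref{alg:aabk}.
\item We need $x^*\in\partial f(x)\iff x=\nabla f^*(x^*)$, so that Fenchel's equality is applicable. This is standard conjugate duality for the finite, strongly convex $f$.
\item The normalization $\mu=1$ must be fixed so that the quadratic coefficient is exactly $\tfrac12$.
\end{itemize}
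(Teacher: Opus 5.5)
Your proof is correct, but it argues on the dual side, whereas the paper argues on the primal side. The paper starts from the three-point identity
\[
\Df{x_{k+1}^*}{x_{k+1}}{\hatx}=\Df{x_k^*}{x_k}{\hatx}-\Df{x_k^*}{x_k}{x_{k+1}}-\inprod{x_{k+1}^*-x_k^*}{\hatx-x_{k+1}} .
\]
It then makes three moves:
\begin{itemize}
\item it lower-bounds $\Df{x_k^*}{x_k}{x_{k+1}}\ge\frac12\norm{x_{k+1}-x_k}_2^2$ by strong convexity~\eqref{eq:D};
\item it splits $\hatx-x_{k+1}=(\hatx-x_k)-(x_{k+1}-x_k)$;
\item it absorbs $-\eta_k\inprod{d_k}{x_{k+1}-x_k}-\frac12\norm{x_{k+1}-x_k}_2^2\le\frac{\eta_k^2}{2}\norm{d_k}_2^2$ by Young's inequality.
\end{itemize}
You instead write both distances in Fenchel form. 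The linear part $-\inprod{x^*}{\hatx}$ is then tracked exactly, and your only estimate is the descent inequality~\eqref{eq:Lip} for $f^*$ along the dual step.

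Both arguments bound the same quantity,
\[
f^*(x_{k+1}^*)-f^*(x_k^*)-\inprod{x_k}{x_{k+1}^*-x_k^*}=\Df{x_{k+1}^*}{x_{k+1}}{x_k},
\]
by $\frac{\eta_k^2}{2}\norm{d_k}_2^2$. You do it through $1$-smoothness of $f^*$; the paper does it through $1$-strong convexity of $f$ plus Young. So the two proofs are dual views of one fact.

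Your version is shorter, and it actually uses~\eqref{eq:Lip}, which the paper states in Section~\ref{sec:basicnotions} but never invokes. The paper's version needs only~\eqref{eq:D}, with no appeal to the Lipschitz gradient of $f^*$. It also keeps the primal increment $x_{k+1}-x_k$ visible until the final Young step.
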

\begin{proof}
For $1$-strongly convex $f$, the properties of the Bregman distance give
\[
  \Df{x_{k+1}^*}{x_{k+1}}{\hatx}\le \Df{x_k^*}{x_k}{\hatx}
     -\inprod{x_{k+1}^*-x_k^*}{\hatx-x_{k+1}}-\tfrac12\norm{x_{k+1}-x_k}_2^2 .
\]
Substituting $x_{k+1}^*=x_k^*-\eta_k d_k$ and writing
\[
-\inprod{x_{k+1}^*-x_k^*}{\hatx-x_{k+1}}=\eta_k\inprod{d_k}{\hatx-x_k}
-\eta_k\inprod{d_k}{x_{k+1}-x_k}
\]
then bounding the last two terms with
$-az-\tfrac12 z^2\le\tfrac12 a^2$ yields the claim.
\end{proof}

\subsection{The expected one-step bound}\label{sec:expbound}
\begin{lemma}\label{lem:descent}
Under the coupling~\eqref{eq:coupling} and Assumption~\ref{ass:errorbound}, with $\mathbf{T}$ as
in~\eqref{eq:Tdef}, the total expectation satisfies
\[
  \Etot\,[\Df{x_{k+1}^*}{x_{k+1}}{\hatx}]\le
     \big(1-\gamma\alpha\eta_k(1-\eta_k\smax(\mathbf{T}))\big)\,\Etot\,[\Df{x_k^*}{x_k}{\hatx}]
     +\frac{\eta_k^2}{2\tau}\,\Tr(\mathbf{PW}^2\mathbf{\Sigma D}^{-2}).
\]
\end{lemma}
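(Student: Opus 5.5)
The plan is to start from Lemma~\ref{lem:onestep}, take the conditional expectation given $x_k$ over both the batch $\Bcal_k$ and the fresh noise, and evaluate the first and second moments of $d_k$ in closed form. Throughout I write $r:=\mathbf{A}x_k-b$ for the clean residual. Each draw contributes
\[
g_j=w_{i_j}\,\frac{r_{i_j}-\varepsilon_j}{\norm{a_{i_j}}^2}\,a_{i_j},
\qquad d_k=\frac1\tau\sum_{j=1}^{\tau} g_j .
\]
The pairs $(i_j,\varepsilon_j)$ are i.i.d.\ across $j$, because the noise is indexed by the draw rather than the row. This independence is what makes the moment computation clean even when a row is drawn twice.

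\emph{First moment.} Since $\Enoise[\varepsilon_j]=0$, and using the coupling~\eqref{eq:coupling},
\[
\Eidx\Enoise[g_j]=\sum_i p_i w_i\frac{r_i}{\norm{a_i}^2}a_i=\frac{\alpha}{\norm{\mathbf{A}}_F^2}\mathbf{A}^\top r .
\]
Hence $\EE[\inprod{d_k}{\hatx-x_k}\mid x_k]=-\frac{\alpha}{\norm{\mathbf{A}}_F^2}\norm{r}_2^2$, using $\mathbf{A}\hatx=b$.

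\emph{Second moment.} I will expand
\[
\norm{d_k}^2=\tau^{-2}\Big(\sum_j\norm{g_j}^2+\sum_{j\neq l}\inprod{g_j}{g_l}\Big).
\]
For the diagonal terms, independence of $\varepsilon_j$ from $i_j$ and zero mean kill the cross term, giving
\[
\EE\norm{g_j}^2=\sum_i p_i w_i^2\frac{r_i^2+\sigma_i^2}{\norm{a_i}^2}.
\]
By the coupling, $p_iw_i^2/\norm{a_i}^2=\alpha w_i/\norm{\mathbf{A}}_F^2$, so this equals $\frac{\alpha}{\norm{\mathbf{A}}_F^2}r^\top\mathbf{W}r+\Cnoise$. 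For $j\neq l$, independence gives $\EE\inprod{g_j}{g_l}=\frac{\alpha^2}{\norm{\mathbf{A}}_F^4}\norm{\mathbf{A}^\top r}^2$. Collecting terms with weights $\tau/\tau^2$ and $\tau(\tau-1)/\tau^2$ yields exactly
\[
\EE[\norm{d_k}^2\mid x_k]=\frac{2\alpha}{\norm{\mathbf{A}}_F^2}\,r^\top\mathbf{T}r+\frac{\Cnoise}{\tau}
\le\frac{2\alpha\,\smax(\mathbf{T})}{\norm{\mathbf{A}}_F^2}\norm{r}^2+\frac{\Cnoise}{\tau}.
\]
This is where the definition~\eqref{eq:Tdef} comes from.

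\emph{Combining.} Plugging both moments into Lemma~\ref{lem:onestep} gives
\[
\EE[\Df{x_{k+1}^*}{x_{k+1}}{\hatx}\mid x_k]\le \Df{x_k^*}{x_k}{\hatx}-\frac{\alpha\eta_k}{\norm{\mathbf{A}}_F^2}\big(1-\eta_k\smax(\mathbf{T})\big)\norm{r}^2+\frac{\eta_k^2}{2\tau}\Cnoise .
\]
When $1-\eta_k\smax(\mathbf{T})\ge0$, I apply Assumption~\ref{ass:errorbound}, $\norm{r}^2\ge\theta(\hatx)\Df{x_k^*}{x_k}{\hatx}$, with $\gamma=\theta/\norm{\mathbf{A}}_F^2$. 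Taking total expectation by the tower property then gives the claim. This needs $\eta_k$ deterministic, which holds for~\eqref{eq:etaopt}.

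Two side conditions must be checked. First, Assumption~\ref{ass:errorbound} requires $x_k^*\in\mathcal{R}(\mathbf{A}^\top)$. This holds by induction if $x_0^*\in\mathcal{R}(\mathbf{A}^\top)$, since every $d_k$ is a combination of rows. Second, the sign condition $\eta_k\le1/\smax(\mathbf{T})$ is satisfied by~\eqref{eq:etaopt}, since $\eta_k<1/(2\smax(\mathbf{T}))$. I expect the main obstacle to be the careful second-moment bookkeeping: separating repeated-index draws from distinct draws, and verifying that the coupling turns both terms into the quadratic form in $\mathbf{T}$.
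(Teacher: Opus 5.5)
Your proposal is correct and follows essentially the paper's proof. Both start from Lemma~\ref{lem:onestep}, use the coupling to turn the inner-product term into $-\frac{\alpha}{\norm{\mathbf{A}}_F^2}\norm{\mathbf{A}x_k-b}_2^2$, expand the average of $\tau$ independent draws into diagonal and off-diagonal parts to get $\frac{2\alpha}{\norm{\mathbf{A}}_F^2}\langle r,\mathbf{T}r\rangle+\Cnoise/\tau$, and apply Assumption~\ref{ass:errorbound} under $\eta_k\le 1/\smax(\mathbf{T})$. The differences are minor: you keep the noise inside each draw $g_j$ instead of first splitting $d_k$ into noiseless and noise parts, and you explicitly check $x_k^*\in\mathcal{R}(\mathbf{A}^\top)$ (given $x_0^*\in\mathcal{R}(\mathbf{A}^\top)$) and that $\eta_k$ is deterministic, which the paper leaves implicit.
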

\begin{proof}
We first compute the conditional expectation $\EE[\,\cdot\mid x_k]=\Eidx\Enoise[\,\cdot\mid x_k]$
of Lemma~\ref{lem:onestep} over the batch $\Bcal_k$ and the noise.

\emph{Inner-product term.} Writing $r_k=\mathbf{A}x_k-b$ and using $\inprod{a_i}{\hatx}=b_i$, the noise
enters linearly and is annihilated by $\Enoise[\varepsilon_i]=0$; the surviving index-expectation
is
\begin{align*}
  \EE\big[\inprod{d_k}{\hatx-x_k}\mid x_k\big]
   &=-\Eidx\Big[w_i\frac{(r_k)_i^2}{\norm{a_i}^2}\Big]
   =-\sum_{i=1}^m\frac{p_iw_i}{\norm{a_i}^2}(r_k)_i^2 \\
   &=-(\mathbf{A}x_k-b)^\top \mathbf{PWD}^{-2}(\mathbf{A}x_k-b)
   =-\frac{\alpha}{\norm{\mathbf{A}}_F^2}\norm{\mathbf{A}x_k-b}_2^2,
\end{align*}
the last step by~\eqref{eq:coupling}.

\emph{Squared-norm term.} Decompose $d_k=d_k^{\mathrm{nf}}-d_k^{\mathrm{n}}$ into noiseless and
noise parts. The cross term vanishes because $\Enoise[\varepsilon_i]=0$. Writing
$v_i=w_i(r_k)_i a_i/\norm{a_i}^2$ and using that the $\tau$ indices are i.i.d.\ under $\Eidx$,
\[
  \Eidx\,[\norm{d_k^{\mathrm{nf}}}^2]
   =\frac1\tau\Eidx\norm{v_i}^2+\frac{\tau-1}{\tau}\norm{\Eidx v_i}^2
   =\Big\langle r_k,\Big(\tfrac1\tau \mathbf{PW}^2\mathbf{D}^{-2}+\tfrac{\tau-1}{\tau}\mathbf{PWD}^{-2}\mathbf{AA}^\top \mathbf{PWD}^{-2}\Big)r_k\Big\rangle .
\]
The matrix in brackets equals $\tfrac{2\alpha}{\norm{\mathbf{A}}_F^2}\mathbf{T}$, so
$\Eidx\,[\norm{d_k^{\mathrm{nf}}}^2]\le \tfrac{2\alpha}{\norm{\mathbf{A}}_F^2}\smax(\mathbf{T})\norm{r_k}^2$. For the
noise part, $\Enoise$ acts first ($\Enoise[\varepsilon_i^2]=\sigma_i^2$ on the diagonal, the
cross terms vanishing), then $\Eidx$:
\[
  \Eidx\Enoise\,[\norm{d_k^{\mathrm{n}}}^2]
   =\frac1\tau\,\Eidx\Big[\frac{w_i^2\sigma_i^2}{\norm{a_i}^2}\Big]
   =\frac1\tau\sum_{i=1}^m\frac{p_iw_i^2\sigma_i^2}{\norm{a_i}^2}
   =\frac1\tau\Tr(\mathbf{PW}^2\mathbf{\Sigma D}^{-2}).
\]
Inserting both into Lemma~\ref{lem:onestep} yields the conditional bound
\[
  \EE\big[\Df{x_{k+1}^*}{x_{k+1}}{\hatx}\mid x_k\big]\le
   \big(1-\gamma\alpha\eta_k(1-\eta_k\smax(\mathbf{T}))\big)\Df{x_k^*}{x_k}{\hatx}
   +\frac{\eta_k^2}{2\tau}\Tr(\mathbf{PW}^2\mathbf{\Sigma D}^{-2}),
\]
after applying Assumption~\ref{ass:errorbound}; taking the total expectation $\Etot$ of both
sides via the tower rule gives the stated result. Full details are in
Appendix~\ref{app:lemma-details}.
\end{proof}

\subsection{Spectral bounds and proof of Proposition~\ref{prop:taumono}}\label{sec:spectral}
\begin{lemma}\label{lem:Tbound}
With $\mathbf{T}$ as in~\eqref{eq:Tdef},
\[
  \smax(\mathbf{T})\le\frac{1}{2\tau}\,\smax(\mathbf{W})
     +\frac{\alpha}{2\norm{\mathbf{A}}_F^2}\Big(1-\frac1\tau\Big)\smax^2(\mathbf{A}).
\]
If $\mathbf{W}=\alpha \mathbf{I}$ this holds with equality:
$\smax(\mathbf{T})=\tfrac{\alpha}{2\tau}\big(1+(\tau-1)\smax^2(\mathbf{A})/\norm{\mathbf{A}}_F^2\big)$. The detailed proof of this lemma can be found in~\cite{tondji2023faster}.
\end{lemma}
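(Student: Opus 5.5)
The statement to prove is Lemma~\ref{lem:Tbound}. I would split $\mathbf{T}$ into its two summands and bound each separately. Since both summands are symmetric positive semidefinite, $\mathbf{T}$ is symmetric PSD and $\smax(\mathbf{T})=\lambda_{\max}(\mathbf{T})$.

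\emph{Upper bound.} Weyl's inequality (subadditivity of $\lambda_{\max}$ on symmetric matrices) gives $\lambda_{\max}(\mathbf{X}+\mathbf{Y})\le\lambda_{\max}(\mathbf{X})+\lambda_{\max}(\mathbf{Y})$. I would apply it with
\[
\mathbf{X}=\tfrac{1}{2\tau}\mathbf{W},\qquad \mathbf{Y}=\tfrac{\alpha}{2\norm{\mathbf{A}}_F^2}\big(1-\tfrac1\tau\big)\mathbf{AA}^\top .
\]
Since $\mathbf{W}$ is diagonal with nonnegative entries, $\lambda_{\max}(\mathbf{X})=\smax(\mathbf{W})/(2\tau)$. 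Since $\tau\ge1$, the coefficient of $\mathbf{Y}$ is nonnegative, and $\lambda_{\max}(\mathbf{AA}^\top)=\smax^2(\mathbf{A})$. Adding the two gives the claimed inequality. The only care needed is to check that the coefficient $1-1/\tau$ is nonnegative, so that scaling preserves the ordering of eigenvalues.

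\emph{Equality for $\mathbf{W}=\alpha\mathbf{I}$.} In this case $\mathbf{X}=\tfrac{\alpha}{2\tau}\mathbf{I}$ commutes with $\mathbf{AA}^\top$, so $\mathbf{T}$ is diagonalized by the left singular vectors of $\mathbf{A}$. Its eigenvalues are $\tfrac{\alpha}{2\tau}+\tfrac{\alpha}{2\norm{\mathbf{A}}_F^2}(1-\tfrac1\tau)s_i^2$, where the $s_i$ run over the singular values of $\mathbf{A}$, padded with zeros if $m>n$. These eigenvalues are increasing in $s_i^2$, so the maximum is attained at $s_i=\smax(\mathbf{A})$. Factoring out $\alpha/(2\tau)$ gives
\[
\tfrac{\alpha}{2\tau}\Big(1+(\tau-1)\smax^2(\mathbf{A})/\norm{\mathbf{A}}_F^2\Big).
\]
Equivalently, if $u$ is a top left singular vector, the Rayleigh quotient at $u$ attains the upper bound.

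No step is a real obstacle. The only subtle point is the non-commuting general case, where equality can fail, and there Weyl's inequality suffices for the bound.
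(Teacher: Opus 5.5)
Your proposal is correct and follows the paper's proof in Appendix~C: the spectral-norm triangle inequality (equivalently Weyl's inequality for the two PSD summands) gives the bound, and when $\mathbf{W}=\alpha\mathbf{I}$ simultaneous diagonalization in the eigenbasis of $\mathbf{AA}^\top$ gives equality. Your explicit zero-padding of the singular values when $m>n$ is a detail the paper's write-up glosses over, since the identity term needs a complete eigenbasis of $\mathbf{AA}^\top$.
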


\noindent\emph{Proof of Proposition~\ref{prop:taumono}.} For $\mathbf{W}=\alpha \mathbf{I}$, differentiating the
closed form of Lemma~\ref{lem:Tbound},\newline
$\frac{d}{d\tau}\smax(\mathbf{T})=\frac{\alpha}{2\tau^2}\big(\smax^2(\mathbf{A})/\norm{\mathbf{A}}_F^2-1\big)\le 0$ since
$\smax^2(\mathbf{A})\le\norm{\mathbf{A}}_F^2$. Sending $\tau\to\infty$ gives the limit. The contraction factor at
$\eta^\star=1/(2\smax(\mathbf{T}))$ is $1-\gamma\alpha/(4\smax(\mathbf{T}))$, monotone in $\smax(\mathbf{T})$; the ratio of
 $\tau=1$ to $\tau\to\infty$ values of $\smax(\mathbf{T})$ is $\norm{\mathbf{A}}_F^2/\smax^2(\mathbf{A})$.

\hfill$\square$

\subsection{Proof of Theorem~\ref{thm:main}}\label{sec:thmproof}
We sketch the argument; the full details are in Appendix~\ref{app:thm-proof}. To obtain a
contraction rate $q\in(0,1)$ we require $1-\eta_k\smax(\mathbf{T})>0$, i.e.\ $\eta_k<1/\smax(\mathbf{T})$. Under this
condition the one-step bound of Lemma~\ref{lem:descent} can be written as
\begin{equation}\label{eq:Dk-onestep}
  \Etot\,[\Df{x_{k+1}^*}{x_{k+1}}{\hatx}]\le D_k(\eta)\cdot\frac{1}{\tau}\Tr(\mathbf{PW}^2\mathbf{\Sigma D}^{-2}),
\end{equation}
where the error sequence obeys the recurrence
$D_k(\eta)=\big(1-\alpha\gamma\eta_k(1-\eta_k\smax(\mathbf{T}))\big)D_{k-1}(\eta)+\tfrac{\eta_k^2}{2}$.
Minimising $D_k(\eta)$ over the single free variable $\eta_k$ (the second derivative is positive,
so this is a genuine minimum) gives the optimal step
$\eta_k=\alpha\gamma D_{k-1}(\eta)/(1+2\alpha\gamma\smax(\mathbf{T})D_{k-1}(\eta))$. Estimating the unknown
$D_{k-1}(\eta)$ by the deterministic sequence $\beta_k$ with
$ \beta_0=\tau\,\Df{x_0^*}{x_0}{\hatx}/ \Tr(\mathbf{PW}^2\mathbf{\Sigma D}^{-2})$ recovers the stated
form~\eqref{eq:etaopt}; the admissibility $\eta_k<1/\smax(\mathbf{T})$ then holds automatically.
Substituting~\eqref{eq:etaopt} back into the recurrence telescopes it to
$\beta_{k+1}=\beta_k(1-\tfrac{\alpha\gamma\eta_k}{2})$, so $\beta_k$ decreases to
zero. Finally $1$-strong convexity gives $\norm{x_k-\hatx}_2^2\le 2\,\Df{x_k^*}{x_k}{\hatx}$,
which combined with~\eqref{eq:Dk-onestep} and $\beta_k=D_{k-1}(\eta)$ yields the
rate~\eqref{eq:main-rate}. \hfill$\square$

\subsection{Optimal weights via Cauchy--Schwarz}\label{sec:optproof}
We minimise $\Cnoise=\Tr(\mathbf{PW}^2\mathbf{\Sigma D}^{-2})=\sum_i p_iw_i^2\sigma_i^2/\norm{a_i}^2$ over all
$(p,w)$ satisfying the coupling~\eqref{eq:coupling} and $\sum_i p_i=1$, $p_i\ge0$. Eliminating
$w_i=\alpha\norm{a_i}^2/(p_i\norm{\mathbf{A}}_F^2)$,
\[
  \Cnoise=\frac{\alpha^2}{\norm{\mathbf{A}}_F^4}\sum_{i=1}^m\frac{\sigma_i^2\norm{a_i}^2}{p_i}.
\]
Minimizing $\sum_i c_i/p_i$ subject to $\sum_i p_i=1$ is a convex problem solved by
$p_i\propto\sqrt{c_i}$ with optimal value $(\sum_i\sqrt{c_i})^2$ (Appendix~\ref{app:lag}). With
$c_i=\sigma_i^2\norm{a_i}^2$ this gives $p_i^{\mathrm{opt}}\propto\sigma_i\norm{a_i}$ and
\[
  \Cnoise^{\mathrm{opt}}=\frac{\alpha^2}{\norm{\mathbf{A}}_F^4}\Big(\sum_i\sigma_i\norm{a_i}\Big)^2,
  \qquad
  \Cnoise^{\mathrm{uni}}=\frac{\alpha^2}{\norm{\mathbf{A}}_F^2}\sum_i\sigma_i^2 .
\]
With $u=(\sigma_i)_i$ and $v=(\norm{a_i})_i$, Cauchy--Schwarz gives
$(\sum_i\sigma_i\norm{a_i})^2\le(\sum_i\sigma_i^2)(\sum_i\norm{a_i}^2)=\norm{\mathbf{A}}_F^2\sum_i\sigma_i^2$,
hence $\Cnoise^{\mathrm{opt}}\le\Cnoise^{\mathrm{uni}}$, with equality iff $\sigma_i\propto\norm{a_i}$ or all  rows are corrupted equally.
This proves Corollary~\ref{cor:optimal}. \hfill$\square$ 

\subsection{Matrix--vector form and unbiasedness}\label{sec:matvec}
\begin{remark}[Matrix--vector form of $d_k$]\label{rem:matvec}
The batch $\Bcal_k$ is sampled i.i.d.\ and is in general a \emph{multiset}. Let
$n_i=\sum_{j=1}^\tau\mathbf 1\{I_j=i\}$ be the multiplicity of row $i$, where $I_1,\dots,I_\tau$
are the drawn indices. A row appearing $n_i$ times contributes $n_i$ terms to~\eqref{eq:dk}, each
with an independent fresh sample $\tilde b_i^{(k,1)},\dots,\tilde b_i^{(k,n_i)}$. Writing
$\bar{\tilde b}_i^{(k)}=n_i^{-1}\sum_{j=1}^{n_i}\tilde b_i^{(k,j)} = b_i + \bar \varepsilon^{(k)}_i, \quad \bar \varepsilon^{(k)}_i = n_i^{-1}\sum_{j=1}^{n_i} \varepsilon_i^{(k,j)}$
\[
  d_k=\frac1\tau\sum_{i=1}^m n_i\frac{w_i}{\norm{a_i}^2}
        \big(\inprod{a_i}{x_k}-\bar{\tilde b}_i^{(k)}\big)a_i
     =\mathbf{A}^\top \mathbf{G_k}\big(\mathbf{A}x_k-\bar{\tilde b}^{(k)}\big),
\]
\[
\mathbf{G_k}=\frac1\tau \mathbf{N_k W D}^{-2},\ \mathbf{N_k}=\Diag(n_1,\dots,n_m).
\]
Taking the batch expectation uses only $\Eidx[n_i]=\tau p_i$, which follows by linearity from
$\Eidx[\mathbf 1\{I_j=i\}]=p_i$, and gives
$\Eidx[\mathbf{G_k}]=\sum_i (p_iw_i/\norm{a_i}^2)e_ie_i^\top=\mathbf{PWD}^{-2}$.
The coupling then collapses $\Eidx[\mathbf{G_k}]$ to $(\alpha/\norm{\mathbf{A}}_F^2)\mathbf{I}$. The averaged noise
$\bar\varepsilon_i^{(k)}$ has variance $\sigma_i^2/n_i$, and multiplying by $n_i$ in the update
reproduces exactly the prefactor $\tau^{-1}\Tr(\mathbf{PW}^2\mathbf{\Sigma D}^{-2})$ of Lemma~\ref{lem:descent}; the
analysis is thus robust to sampling with or without replacement, provided each query returns fresh
noise. In the large-$\tau$ regime one may replace $\mathbf{G_k}$ by $\Eidx[\mathbf{G_k}]=\mathbf{PWD}^{-2}$ for the matrix
part, \emph{provided} the noise is still injected through the realised batch so that the $1/\tau$
suppression is retained.
\end{remark}

\section{Numerical experiments}\label{sec:experiments}
We present several experiments to demonstrate the effectiveness of algorithm~\ref{alg:aabk} under various conditions. 

\subsection{ Synthetic experiments}
\label{sec:synthetic-experiments}
In this part, we present several experiments to illustrate the result of Theorem~\ref{thm:main}. For all experiments we consider $f(x) = \lambda \cdot \|x\|_1 + \frac{1}{2}\|x\|^2_2$, where $\lambda$ is the sparsity parameter which gives us $\nabla f^*(x) = S_{\lambda}(x)$. Note that $f$ is $1$-strongly convex but not smooth. Synthetic data for the experiments are generated as follows:
all elements of the data matrix $\mathbf{A} \in \RR^{m\times n}$ ($m=2000, n=100$) are chosen independently and identically distributed from the standard normal distribution $\mathcal{N}(0, 1)$. To construct the sparse solution $\hat x \in \RR^n$, we generate a random vector with $s \;(\geq 0)$ non-zero entries from the standard normal distribution and we set it as $\hat x$ and the corresponding right-hand side is $b = \mathbf{A}\hat x \in \RR^m$. 

The benefit of the ABK method over the standard RK has already been shown in~\cite{tondji2024adaptive}, so in this part the comparison will focus more on the methods using adaptive step-size. To measure the performance, we used the plot of the relative error $\|x_k -\hat{x}\|_2 / \| \hat{x}\|_2$ and the average relative residual error $\| \mathbf{A}x_k - b\|_2/\|b\|_2$ in a semilog plot against the number of iterations.\\

To unify the implementation and for fair-comparison, we implement all the methods below under the same algorithm. Concretely, at each iteration $k$ we sample a batch of block indices $\Bcal_k\subseteq\{1,\dots,M\}$ of
size $\tau$, i.i.d.\ from $p$ (with replacement), with $1 \leq M \leq m $ being the number of blocks and form the weighted average of the individual
Kaczmarz directions,
\begin{equation}
\label{eq:dk1}
      d_k = \frac{1}{\tau} \sum_{i \in \mathcal{B}_k} w_i\frac{ A_{(i)}^{\top}(A_{(i)}x_k - \tilde{b}_{(i)})}{\|A_{(i)}\|_2^2}
\end{equation}
In this way we keep Algorithm~\ref{alg:aabk} as it is but replace the update of the direction vector $d_k$ with the one in Eq~\eqref{eq:dk1}. We compare the following six methods:
\begin{enumerate}
\item \textbf{ABK}: The Adaptive Bregman-Kaczmarz method~\cite{tondji2024adaptive}, which can be derived from Algorithm~\ref{alg:aabk}(eq~\eqref{eq:dk1}), with number of block $M=100,$ 
adaptive  stepsize $\eta_{k}$ defined by~\eqref{eq:etaopt}, batch size $\tau=1,$ an exact value for $\beta_{0}$ (using the knowledge of the ground truth solution).
\item \textbf{AABK\_uni}: The Adaptive Averaged Bregman–Kaczmarz method, i.e. Algorithm~\ref{alg:aabk}, row-wise and Uniform weights with $M=m$, $\mathbf{W}=\alpha \mathbf{I},$ an adaptive stepsize $\eta_{k}$ defined by~\eqref{eq:etaopt}, batch size $\tau=20,$ an exact value for $\beta_{0}$ (using the knowledge of the ground truth solution).
\item \textbf{AABK\_gen}: The Adaptive Averaged Bregman–Kaczmarz method, i.e. Algorithm~\ref{alg:aabk}, row-wise and Optimal noise-aware weights with $M=m,$  $w_i\propto\norm{a_i}/\sigma_i$,
$p_i\propto \sigma_i\norm{a_i}$, an adaptive  stepsize $\eta_{k}$ defined by~\eqref{eq:etaopt}, batch size $\tau=20,$ an exact value for $\beta_{0}$ (using the knowledge of the ground truth solution).
\item \textbf{RK}: The Randomized Kaczmarz method, block-wise, i.e. Algorithm~\ref{alg:aabk} with $M=100,$ batch size $\tau=1,$ constant step size $\eta_{k}=1$ (hence, neither $\gamma$ nor $\beta_0$ is needed, especially no ground truth knowledge is needed).
\item \textbf{RBKA\_uni}: The Randomized Batch Kaczmarz method, row-wise with uniform weights, i.e. Algorithm~\ref{alg:aabk} with $M=2000,$ batch size $\tau=20,$ $\eta_{k}=1$ (hence, neither $\gamma$ nor $\beta_0$ is needed, especially no ground truth knowledge is needed).
\item \textbf{RBKA\_gen}: The Randomized Batch Kaczmarz method, row-wise with optimal noise-aware weights, i.e. Algorithm~\ref{alg:aabk} with $M=2000,$ $w_i\propto\norm{a_i}/\sigma_i$,
$p_i\propto \sigma_i\norm{a_i}$, batch size $\tau=20,$  $\eta_{k}=1$ (hence, neither $\gamma$ nor $\beta_0$ is needed, especially no ground truth knowledge is needed).
\end{enumerate}

For fair comparison, the batch size $\tau$ for RBKA/AABK and the block-size (bs) for RK/ABK are set equal so that all algorithms use the same number of rows in each iteration. The right-hand side is corrupted non-uniformly: a randomly chosen subset of $ N_{\mathrm{large}} =100$ equations $(5\%)$ is assigned a large noise level $\sigma_i = \text{scale}\cdot\sigma_0 = 0.5$ (with $\text{scale}=10$ and $\sigma_0=0.05$), while the remaining $1900$ equations $(95\%)$ retain $\sigma_i = \sigma_0 = 0.05$, producing a sparse but strongly heterogeneous noise profile. To tune the hyperparameters, we first fixed $\lambda=0.5$ for both ABK and AABK, we independently select the best $\gamma$ for each method from the grid $\{0.001, 0.005, 0.01, 0.05, 0.1, 1, 2\}$. As shown in Figure~\ref{fig:combined_comparison}, RK and both RBKA variants stagnate at the noise floor: their constant step size $\eta_k = 1$ prevents the iterates from progressing beyond the level imposed by the measurement noise. ABK, equipped with an adaptive step size, converges below this floor. AABK with uniform weights further improves over ABK by leveraging the batch structure to reduce variance. AABK with general (noise-adaptive) weights achieves the lowest error in both the $\tau=1$ and $\tau=20$ settings, and the gap over all baselines widens significantly as the batch size increases.

\begin{figure}[htb!]%
    \centering
    \includegraphics[width=0.47\textwidth]{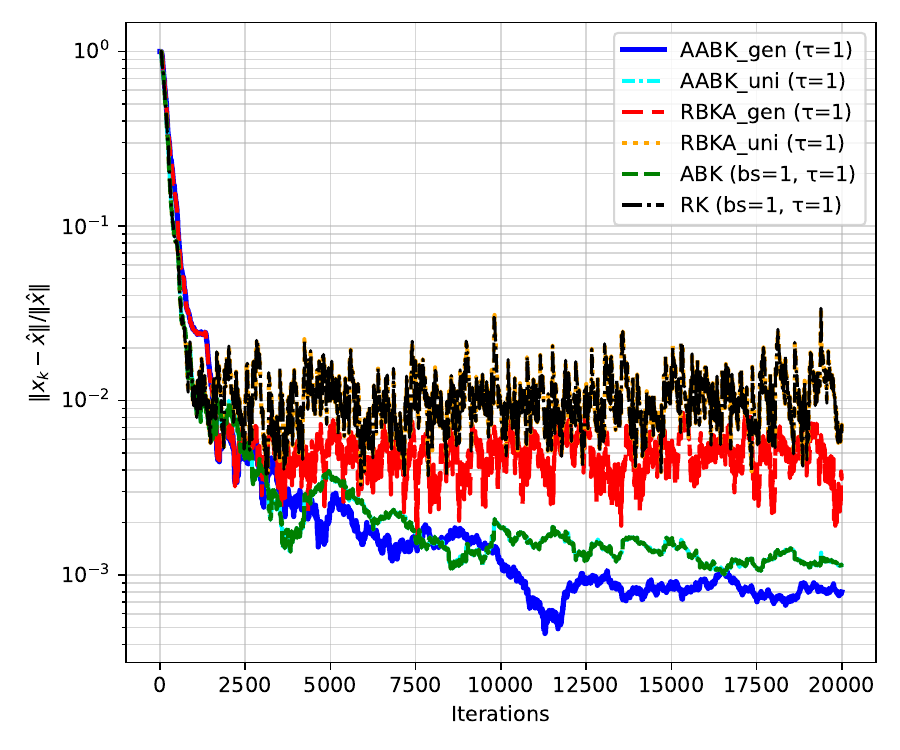}%
    \hfill%
    \includegraphics[width=0.47\textwidth]{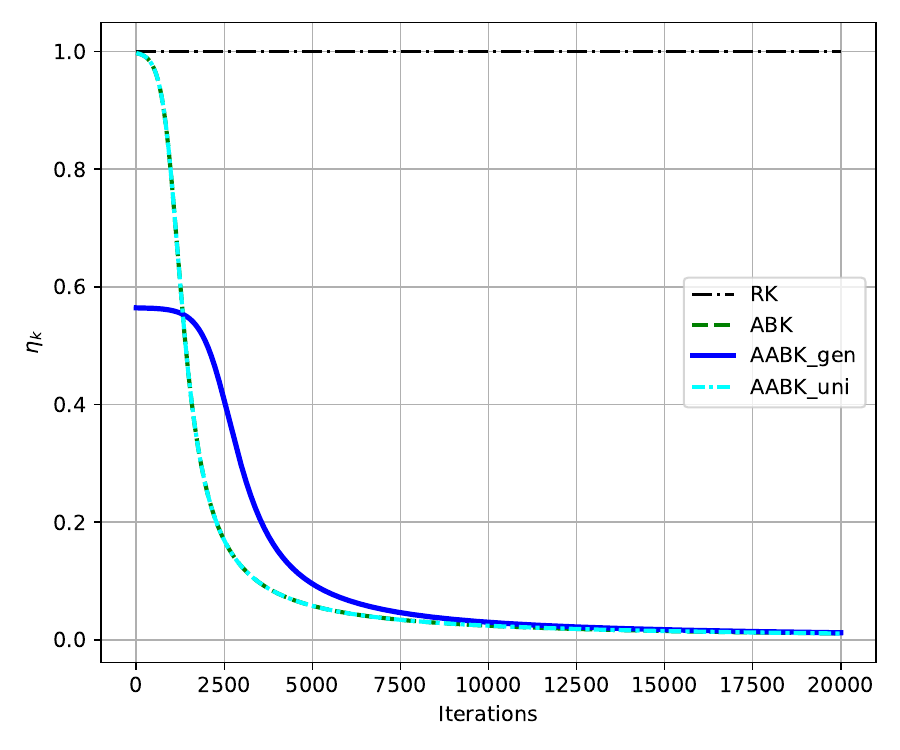}
    
    \vspace{1ex} 
    
    \includegraphics[width=0.47\textwidth]{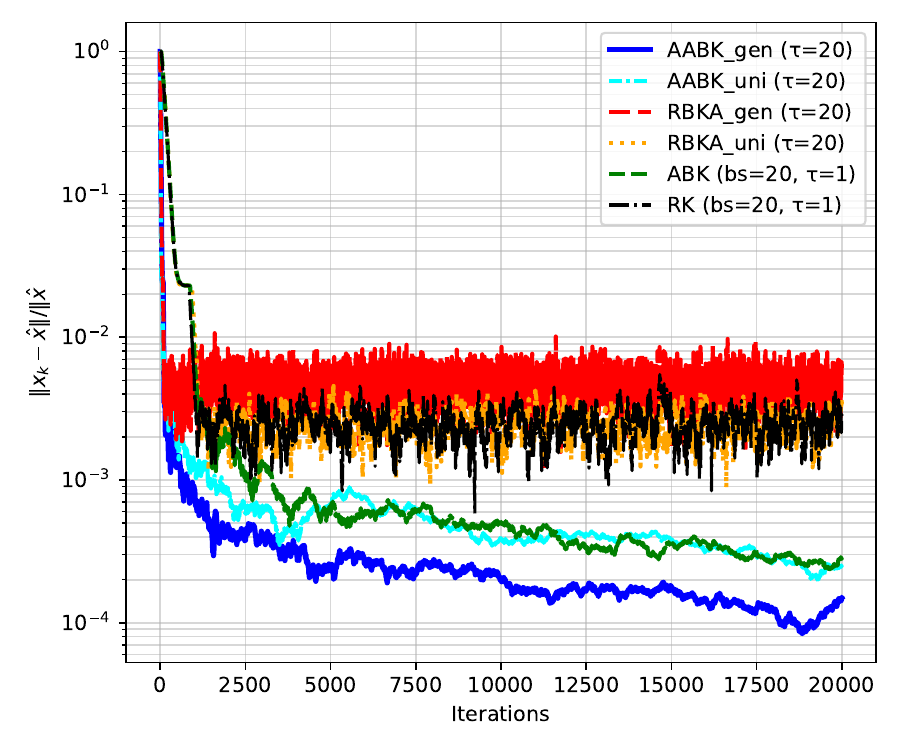}%
    \hfill%
    \includegraphics[width=0.47\textwidth]{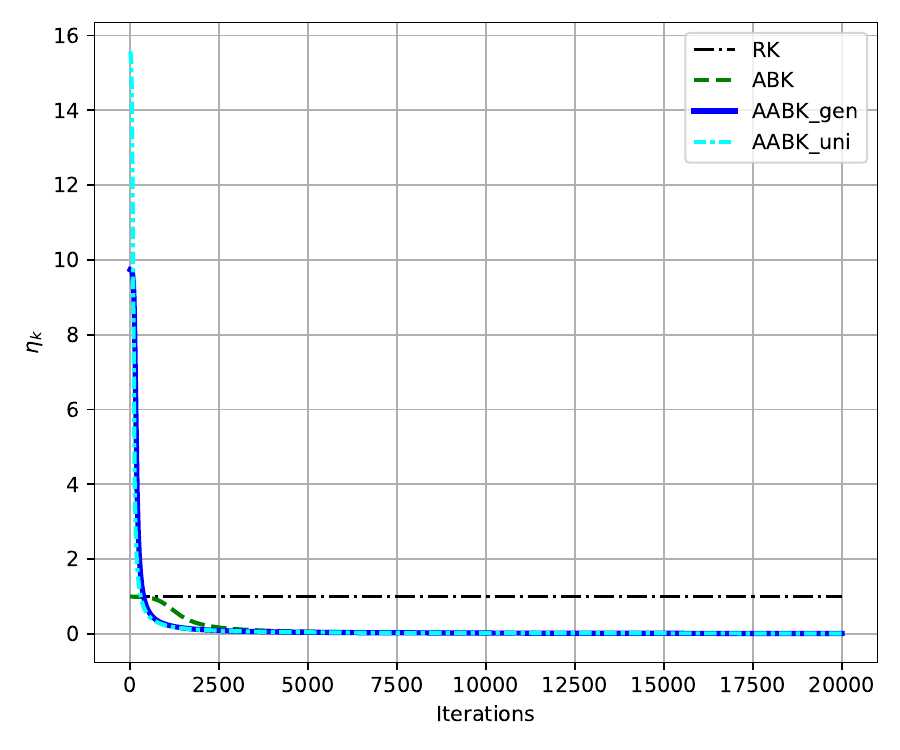}%
    
    \caption{A comparison of the six methods as described in Section~\ref{sec:synthetic-experiments} for  $m=2000$, $n=100$, sparsity $s=10$, $\alpha=1$, $\gamma=0.01$, $\lambda=0.5$. 
    Left column: relative error $\|x_k-\hat{x}\|/\|\hat{x}\|$. 
    Right column: adaptive step size $\eta_k$. 
    Within each subfigure, the top plots correspond to the row-wise setting ($\tau=1$, $\text{bs}=1$), while the bottom plots correspond to the batched setting ($\tau=20$, $\text{bs}=20$ for RK/ABK). 
    }%
    \label{fig:combined_comparison}%
\end{figure}

\subsubsection{Effect of the number of corrupted rows and the per-iteration row budget}
\paragraph{Setup:} We consider two experiments that assess how ABK (uniform weights, $\tau=1$) and AABK\_gen (optimal noise-aware weights) respond to the noise structure, keeping the per-iteration row budget equal for both methods throughout.
\begin{enumerate}
    \item In the first experiment (left), we fix the system matrix $\mathbf{A}$ and vary the number of corrupted rows $N_{\mathrm{large}} \in \{5, 50, 200, 500, 1000\}$, each receiving noise scaled by a factor of $10$ (hollow markers) or $800$ (filled markers), and record the final relative error after a fixed iteration count.

    \item In the second experiment (right), we fix $N_{\mathrm{large}}=100$ corrupted rows and vary the per-iteration row budget $k \in \{5, 50, 100, 500, 1000\}$ under two noise scales, $10$ (hollow markers) and $500$ (filled markers). ABK uses $\mathrm{block\_size}=k$, $\tau=1$; AABK-gen uses $\tau=k$, $\mathrm{block\_size}=1$, so both access exactly $k$ rows per iteration.
\end{enumerate}

\begin{figure}[htb]%
    \centering
    \includegraphics[width=0.47 \textwidth]{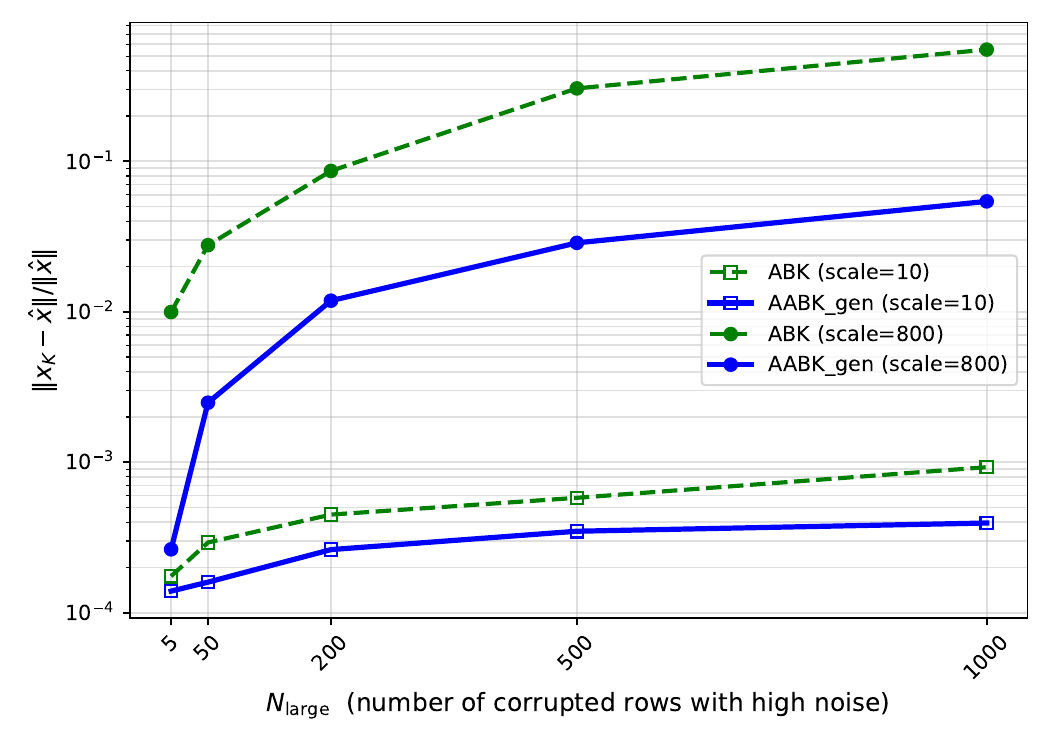}
    \qquad
    \includegraphics[width=0.47 \textwidth]{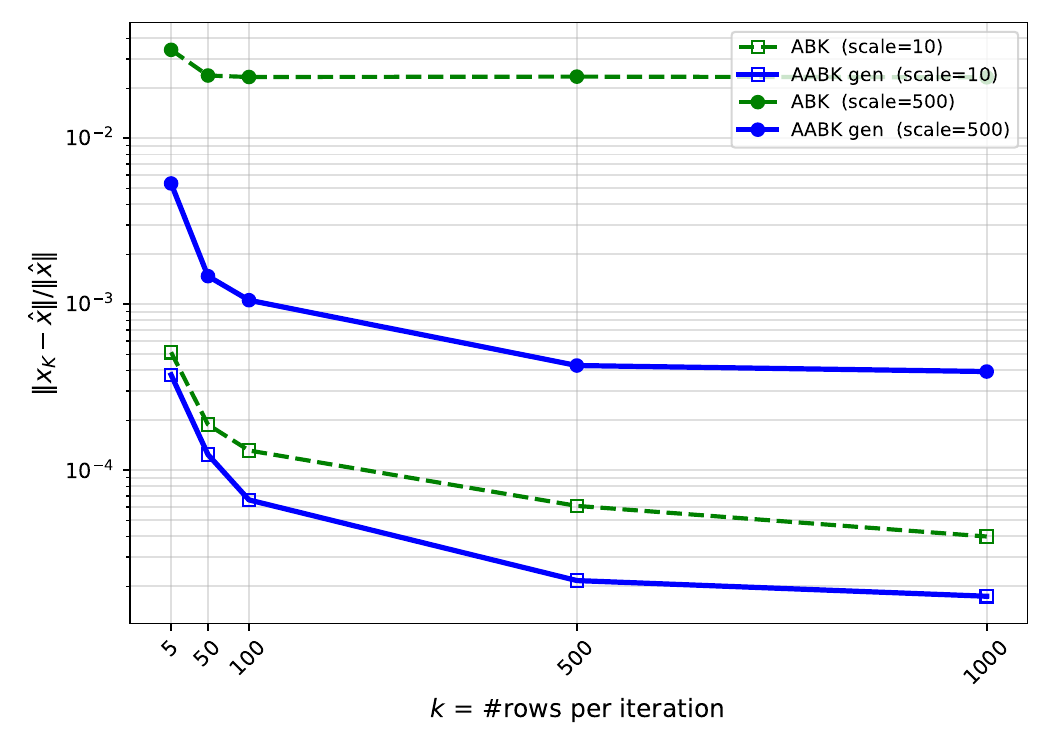}\vspace{-2ex}
    \caption{A comparison of the ABK and AABK methods as described in Section~\ref{sec:synthetic-experiments} for  $m=2000$, $n=100$, sparsity $s=10$, $\alpha=1$, $\gamma=0.01$, $\lambda=0.5$. Left: Effect of the number of corrupted rows $N_{\mathrm{large}}$. Right: Effect of the per-iteration row budget. 
    }%
    \label{fig:example1}%
\end{figure}

\paragraph{Interpretation}: Figure~\ref{fig:example1} reports both experiments: the left panel shows the final relative error as a function of the number of corrupted rows $N_{\mathrm{large}}$ with large noise, and the right panel as a function of the per-iteration row budget $k$. Both reveal the same structural limitation of uniform weights. In the left panel, ABK's error grows almost linearly with $N_{\mathrm{large}}$ and deteriorates sharply at scale $800$, since uniform weights cannot discount the heavily corrupted rows; AABK\_gen's noise-adapted weights (high $\sigma_i \Rightarrow$ small $w_i$) keep the error an order of magnitude lower across all regimes. In the right panel, at moderate noise (scale = 10) both methods improve as $k$ grows, but AABK\_gen gains considerably more. At high noise (scale $=500$), ABK's error is essentially flat: grouping more rows into a single block provides no benefit when all rows are weighted equally. AABK\_gen, by contrast, drops by nearly two orders of magnitude as $k$ increases — a larger $\tau$ averages out more noise per step, directly reducing the bias term in the convergence bound. Together, the two plots show that the gain from a larger row budget is only realised when weights are adapted to the noise structure.

\subsection{CT reconstruction under heterogeneous noise}\label{sec:ct}

\emph{Setup.}
As an example on computerized tomography (CT), we used the implementation of the Radon transform from the \texttt{Python} library \texttt{skimage} and use it to build a system matrix for a parallel beam CT for a phantom of size $N\times N$ with $N=50$ and with $60$ equispaced angles. Hence, the system matrix $\mathbf{A}$ has size $3000\times 2500$, i.e. $m=3000$ and $n=2500$. A random subset of $30$ rows corresponding to $1\%$ of the total numbers of rows $m$ are corrupted by very high heterogeneous Gaussian noise of  $\sigma_i=10\,\sigma_{\mathrm{base}} \approx 33.779$ (i.e., scale$=10$),  while the rest is only corrupted by a noise level of $1\%$ i.e we choose $\sigma_{\mathrm{base}}=10^{-2}\norm{b}_2 \approx 3.3779$. Fresh noise $\varepsilon_i^{(k)}\sim\mathcal N(0,\sigma_i^2)$ is
drawn for each iteration. 
For reproducibility we fixed the random seed $42$ and a relaxation parameter $\alpha=1$, we used different methods for reconstruction, each run for 40 epochs, i.e. for $120000$ iterations and using the function $f(x)=\lambda\norm{x}_1+\tfrac12\norm{x}_2^2$ with $\lambda=30$ since the phantom is $63.5\%$ sparse. For the comparison over the reconstruction, we use the following metrics: relative error, residual errors, $\mbox{SSIM}$ and $\mbox{PSNR}.$ As in synthetic experiments, we set the batch size $\tau$ for RBKA/AABK and the block-size for RK/ABK vice versa equal (see Table \ref{tab:methods}) so that all algorithms use the same budget in each iteration.

\begin{table}[htb!]\centering
\caption{Algorithms compared in the CT experiment.}\label{tab:methods}
\resizebox{\textwidth}{!}{%
\begin{tabular}{lccccccc}
\toprule
Method & Sampling & Batch size ($\tau$) & Block size  & $\gamma$ & $\beta_0$ & Step size ($\eta_k$)\\
\midrule
RK   & uniform     & $1$ & $50$ & / & / &  $\eta_k=1$\\
ABK  & uniform     & $1$ & $50$  & $10^{-4}$ & $1.08 \times 10^{-4}$ & $\eta_k$ from~\eqref{eq:etaopt}\\
RBKA\_uni       & uniform & $50$ & $1$ & / & / &  $\eta_k=1$\\
RBKA\_gen      & noise-aware & $50$ & $1$ & / & / &  $\eta_k=1$\\
AABK\_uni      & uniform & $50$ & $1$ & $10^{-4}$  & $5.43 \times 10^5$   & $\eta_k$ from~\eqref{eq:etaopt}\\
AABK\_uni\_heur & uniform & $50$ & $1$ & $1.1 \times 10^{-5}$  &  $7.70 \times 10^7$    & $\eta_k$ from~\eqref{eq:etaopt}\\
AABK\_gen & noise-aware & $50$ & $1$ & $10^{-4}$  &   $9.20 \times 10^5$    & $\eta_k$ from~\eqref{eq:etaopt}\\
AABK\_gen\_heur  & noise-aware & $50$ & $1$ & $4.1 \times 10^{-5}$  &  $1.80 \times 10^{10}$   & $\eta_k$ from~\eqref{eq:etaopt}\\

\bottomrule
\end{tabular}
}
\end{table}

\emph{Methods.} We compare four schemes (Table~\ref{tab:methods}): RK (uniform, fixed step), ABK
(uniform, adaptive step), RBKA (uniform and noise-aware weights, fixed step) and AABK (uniform and noise-aware weights,
adaptive step). The noise-aware methods use $w_i\propto\norm{a_i}/\sigma_i$,
$p_i\propto \sigma_i\norm{a_i}$. The heuristic hyperparameters $\tilde{\gamma}$ and $\tilde{\beta}_0$ for adaptive step size methods are estimated following Section~\ref{sec:hyper} with $N_0=10\, 000$ and the $N_1 = 25\, 000,$ for each methods (block-wise, row-wise with uniform and noise-aware weights), namely AABK\_uni\_heur and AABK\_gen\_heur. Methods (ABK, AABK\_uni, and AABK\_gen) use a guessed $\gamma=0.0001$ with exact value of $\beta_0$,  computed for each method, while the heuristic variants estimate both $\gamma$ and $\beta_0$ as in \S\ref{sec:hyper}. 
\begin{figure}[htb!]
    \centering
    \includegraphics[width=0.6\linewidth]{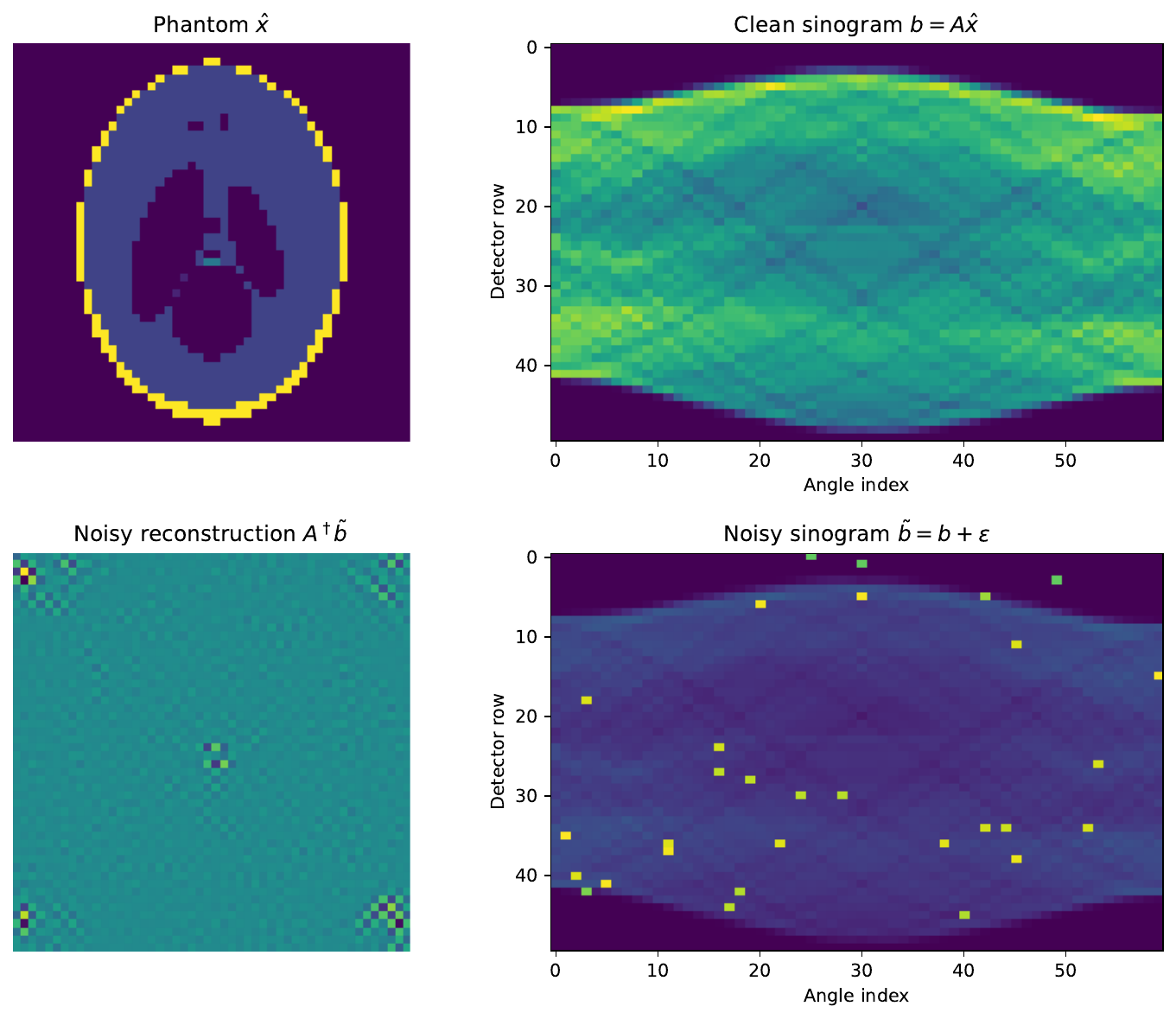}
    \caption{Pseudo-inverse reconstruction from noisy sinogram.}
    \label{fig:CT_phantom_sinogram}
\end{figure}

\begin{figure}[htb!]
    \centering
    \begin{minipage}[b]{0.49\textwidth}
        \centering
        \includegraphics[width=1.0\linewidth]{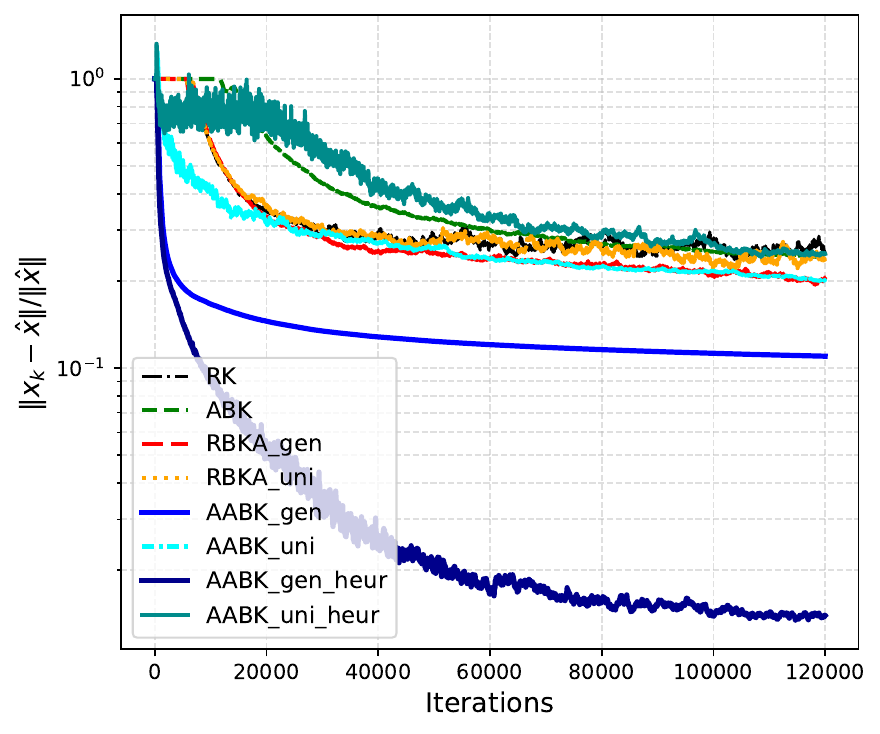}
    \end{minipage}
    \hfill 
    \begin{minipage}[b]{0.49\textwidth}
        \centering
        \includegraphics[width=1.0\linewidth]{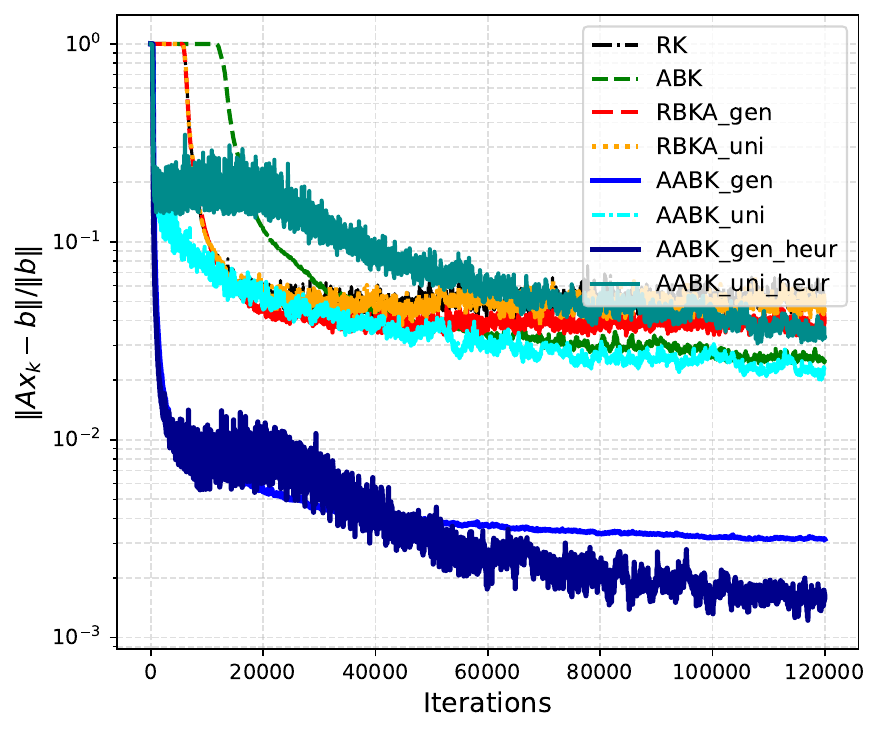}
    \end{minipage}
    \caption{Comparison of relative error (left) and residual error (right) for different methods in the CT experiment.}
    \label{fig:CT_errors_comparison}
\end{figure}
\emph{Results.} 
Figure \ref{fig:CT_phantom_sinogram} illustrates how the phantom projection  via the Radon transform over $60$ angles, yields a clean sinogram $b = \mathbf{A}\hat x$. One realisation of the noisy sinogram $\tilde{b} = b + \epsilon$ is built with heteroscedastic noise described above. The naive pseudo-inverse $\mathbf{A}^\dagger\tilde{b}$ reconstruction  illustrates that direct inversion is unstable under noise background and therefore motivates the regularised iterative methods compared thereafter.

Figure~\ref{fig:CT_errors_comparison} shows that AABK with optimal noise-aware
weights converges fastest and attains the smallest error at every iteration.
The non-adaptive methods exhibit a transient spike early on in both relative and
residual errors, caused by sampling heavily corrupted rows while the residual is
still large; the adaptive step in ABK and AABK suppresses this overshoot by
automatically down-weighting large-noise updates. Among row-wise methods,
optimal noise-aware weights consistently outperform their uniform counterparts
(AABK\_gen vs.\ AABK\_uni and RBKA\_gen vs.\ RBKA\_uni), confirming the
effectiveness of the noise-aware weighting scheme under the heteroscedastic
noise profile of this CT setting. The residual curves mirror the relative-error
trends. The heuristic AABK variants remain competitive with their exact-hyperparameter counterparts throughout the run, confirming that the estimators of \S\ref{sec:hyper} recover usable values of $\gamma$ and $\beta_0$ without any knowledge of the ground-truth solution $\hat x$: the only quantities the
user must supply are the sparsity parameter $\lambda$ and the two window indices
$N_0$ and $N_1$ delimiting the decay and plateau phases used to estimate
$\gamma$ and $\beta_0$. We note that the very large heuristic $\beta_0$ (Table~\ref{tab:methods}) keeps the step in its aggressive constant-step phase for longer, which can even improve the final metrics; the differences between the exact and heuristic rows in Table~\ref{tab:Metrics} therefore reflect the choice of $\gamma$ and $\beta_0$, not the fidelity of the estimation alone.

Moreover, the relative and residual error curves of the
adaptive methods continue to decrease through the final epoch and are expected
to improve further with more iterations. This is notable given that AABK
operates only on corrupted measurements each query returning a fresh,
independent perturbation of the true right-hand side yet still drives the
error toward the noise-free solution.
\begin{figure}[htb!]
    \centering
    \includegraphics[width=1.0\linewidth]{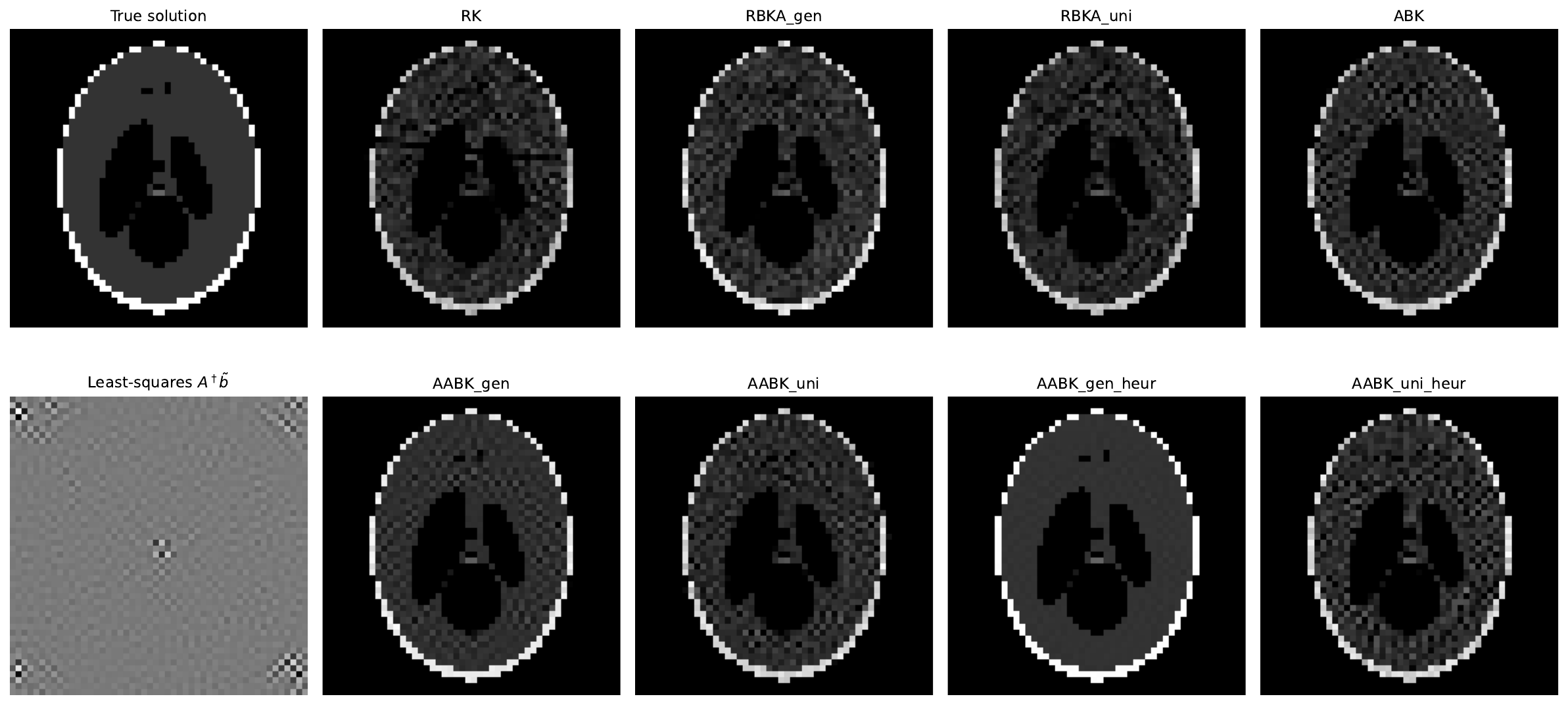}
    \caption{Ground truth solution and reconstruction by the different methods  under heterogeneous noise.}
    \label{fig:CT reconstruction}
\end{figure}

Figure \ref{fig:CT reconstruction} shows the ground truth image alongside the least-squares solution $\mathbf{A}^\dagger\tilde{b}$ and the reconstructions produced by different methods. The least-squares solution is severely corrupted by noise and yields no recognizable anatomical structure. The RK reconstruction exhibits strong graininess and visible horizontal streak artifacts, with the internal structures barely discernible. ABK and the two RBKA variants (uniform and general) partially recover the overall shape but retain significant noise in both the background and the non-zero regions. The AABK methods, by contrast, produce markedly cleaner reconstructions: the background is substantially suppressed and the anatomical structures of the phantom are more faithfully recovered, owing to the combination of the sparsity-promoting regularizer $f$ and the adaptive noise-weighted averaging. 
Both the uniform and general variants of AABK achieve comparable visual quality, as do their heuristic counterparts, confirming that the heuristic parameter selection does not degrade reconstruction performance. 

To quantitatively assess reconstruction quality we report SSIM and PSNR in Table~\ref{tab:Metrics}, two standard image quality metrics that measure perceptual similarity and signal fidelity relative to the ground truth.

\begin{table}[htb!]\centering
\caption{SSIM and PSNR metrics of all methods compared in the CT experiment.}\label{tab:Metrics}
\setlength{\tabcolsep}{4pt}
\resizebox{\textwidth}{!}{%
\begin{tabular}{lcccccccc}
\toprule
Method & RK & ABK & RBKA\_uni & RBKA\_gen & AABK\_uni & AABK\_uni\_heur & AABK\_gen & AABK\_gen\_heur \\
\midrule

SSIM ($\uparrow$) & $0.8563$ & $0.8612$ & $0.8645$ & $0.8856$ & $0.8863$ & $0.8612$ & $\mathbf{0.9409}$ & $\mathbf{0.9975}$ \\\\
PSNR ($\uparrow$) & $24.4112$ & $24.7586$ & $24.8529$ & $26.1943$ & $26.3740$ & $24.5479$ & $\mathbf{31.6334}$ & $\mathbf{49.5461}$ \\
\bottomrule
\end{tabular}%
}
\end{table}

\emph{Discussion.} Two orthogonal gains combine in AABK. \emph{Noise-aware sampling} assigns low
probability to noisy rows, avoiding the transient spikes and reducing estimator variance (shared
with RBKA). The \emph{adaptive step} tunes $\eta_k$ to the current energy $\beta_k$, preventing
overshoot when the residual is large and accelerating near the solution (shared with ABK). Only
AABK combines both, explaining its superior convergence and reconstruction quality.

\section{Conclusion}
\label{sec:conclusion}
We introduced Adaptive Averaged Bregman--Kaczmarz (AABK), which combines the
variance reduction of averaged Kaczmarz updates with the exact recovery of
adaptive stepping under independent noise, in a single method whose convergence
is governed by one positive-semidefinite matrix. This reduction settles a
question left open by the block ABK analysis: with an averaged rather than
summed update, larger batches provably help, and the maximal gain is exactly the
stable rank of the system matrix. On top of this, a noise-aware weighting
minimises the noise constant among all admissible couplings and is strictly
better than uniform weighting whenever the noise is not proportional to the row
norms. Crucially, the noise-optimal weights automatically preserve the batch-size
acceleration, so the two gains compose. Our step-size analysis further shows that
AABK interpolates on its own between a fast initial phase and a vanishing tail
that drives the error exactly to zero, with hyperparameters estimable from a
single auxiliary run. The quality of these estimates still rests on two
user-chosen indices, $N_0$ and $N_1$, which delimit the phases used to estimate
$\gamma$ and $\beta_0$ respectively; although they play a significant role, good
values can be identified by inspecting the residual decay of a
standard Bregman--Kaczmarz run---selecting the steady linear-decay regime for
$\gamma$ and the stagnation regime for $\beta_0$. Experiments on CT
reconstruction under sparse, heterogeneous noise corroborate every prediction.
Several directions remain open. Our guarantees are in expectation;
high-probability bounds via martingale concentration are a natural next step. The
coupling between sampling and weights is imposed for tractability, and the fully
optimal pair may lie outside it.


\bibliographystyle{plain}  
\bibliography{ref}   

\newpage

\appendix

\section*{Appendix}


\section{Monotone acceleration for the optimal weights: proof of Proposition~\ref{prop:taumono}(iii)}
\label{app:optmono}
We show that the optimal noise-aware weights of Corollary~\ref{cor:optimal} satisfy the
general-weight condition of Proposition~\ref{prop:taumono}(ii) automatically, with strict slack.

\paragraph{The optimal weights.}
Under the coupling~\eqref{eq:coupling}, $w_i=\alpha\norm{a_i}^2/(p_i\norm{\mathbf{A}}_F^2)$. Substituting
the optimal sampling distribution $p_i^{\mathrm{opt}}=\sigma_i\norm{a_i}/S$ with
$S:=\sum_{j=1}^m\sigma_j\norm{a_j}$ gives
\[
  w_i=\frac{\alpha\norm{a_i}^2}{\norm{\mathbf{A}}_F^2}\cdot\frac{S}{\sigma_i\norm{a_i}}
     =\frac{\alpha\,S}{\norm{\mathbf{A}}_F^2}\cdot\frac{\norm{a_i}}{\sigma_i},
\]
so $w_i\propto\norm{a_i}/\sigma_i$. As $\mathbf{W}=\Diag(w_1,\dots,w_m)$ is diagonal with nonnegative
entries,
\[
  \smax(\mathbf{W})=\max_{1\le i\le m} w_i=\frac{\alpha\,S}{\norm{\mathbf{A}}_F^2}\,\max_{1\le i\le m}\frac{\norm{a_i}}{\sigma_i}.
\]

\paragraph{A lower bound on $\smax(\mathbf{W})$.}
Let $r\in\arg\max_i \norm{a_i}/\sigma_i$, so that $\norm{a_r}/\sigma_r\ge\norm{a_j}/\sigma_j$ for
every $j$. Then
\[
  S\cdot\frac{\norm{a_r}}{\sigma_r}
   =\Big(\sum_{j=1}^m\sigma_j\norm{a_j}\Big)\frac{\norm{a_r}}{\sigma_r}
   =\sum_{j=1}^m\sigma_j\norm{a_j}\cdot\frac{\norm{a_r}}{\sigma_r}
   \;\ge\;\sum_{j=1}^m\sigma_j\norm{a_j}\cdot\frac{\norm{a_j}}{\sigma_j}
   =\sum_{j=1}^m\norm{a_j}^2=\norm{\mathbf{A}}_F^2,
\]
where the inequality replaces $\norm{a_r}/\sigma_r$ by the no-larger $\norm{a_j}/\sigma_j$ in each
term. Therefore
\[
  \smax(\mathbf{W})=\frac{\alpha\,S}{\norm{\mathbf{A}}_F^2}\cdot\frac{\norm{a_r}}{\sigma_r}
   \;\ge\;\frac{\alpha}{\norm{\mathbf{A}}_F^2}\cdot\norm{\mathbf{A}}_F^2=\alpha.
\]

\paragraph{Conclusion.}
The top eigenvalue of $\mathbf{AA}^\top$ never exceeds its trace, $\smax^2(\mathbf{A})\le\Tr(\mathbf{AA}^\top)=\norm{\mathbf{A}}_F^2$,
so $\alpha\ge\alpha\,\smax^2(\mathbf{A})/\norm{\mathbf{A}}_F^2$. Combining,
\[
  \smax(\mathbf{W})\;\ge\;\alpha\;\ge\;\frac{\alpha\,\smax^2(\mathbf{A})}{\norm{\mathbf{A}}_F^2},
\]
which is exactly the condition of Proposition~\ref{prop:taumono}(ii). The condition therefore
holds for every $\mathbf{A}$ and every noise profile, and the slack between the two sides is at least the
factor $\norm{\mathbf{A}}_F^2/\smax^2(\mathbf{A})$, the stable rank of $\mathbf{A}$. \hfill$\square$

\section{Detailed computation for Lemma~\ref{lem:descent}}\label{app:lemma-details}
This appendix gives, in full, the two expectations appearing in the proof of
Lemma~\ref{lem:descent}: the inner-product term $\EE[\inprod{d_k}{\hatx-x_k}\mid x_k]$ and the
squared-norm term $\EE[\norm{d_k}^2\mid x_k]$. Throughout, the conditional expectation
$\EE[\,\cdot\mid x_k]$ is taken over both the batch $\Bcal_k$ (sampled i.i.d.\ from $p$) and the
fresh noise; we drop the conditioning from the notation. We write $r_k=\mathbf{A}x_k-b$ for the residual,
so that $(r_k)_i=\inprod{a_i}{x_k}-b_i$, and recall from~\eqref{eq:noise-model} that
$\tilde b_i=b_i+\varepsilon_i$ with $\Enoise[\varepsilon_i]=0$, $\Enoise[\varepsilon_i^2]=\sigma_i^2$, the
$\varepsilon_i$ independent across $i$ and of the index draw.

\subsection*{B.1\quad Preliminaries: two expectation identities under i.i.d.\ sampling }
Let $g:\{1,\dots,m\}\to\RR^n$ be any fixed (deterministic given $x_k$) vector-valued function of
the row index, and let $I_1,\dots,I_\tau$ be the i.i.d.\ indices of the batch, each with
$\PP[I_j=i]=p_i$. We record two elementary facts used repeatedly below.

\emph{(a) Mean of a single draw.} For one index $I\sim p$,
\begin{equation}\label{eq:appB-mean}
  \Eidx\,[g(I)]=\sum_{i=1}^m p_i\,g(i).
\end{equation}

\emph{(b) Mean of an average of $\tau$ i.i.d.\ draws.} Put
$S=\frac1\tau\sum_{j=1}^\tau g(I_j)$, where $I_1,\dots,I_\tau\overset{\text{iid}}{\sim}p$. By
linearity, since each $I_j$ has the same marginal law $p$,
\begin{equation}\label{eq:appB-average}
  \Eidx[S]=\frac1\tau\sum_{j=1}^\tau \Eidx[g(I_j)]
          =\Eidx[g(I_1)]
          =\sum_{i=1}^m p_i\,g(i).
\end{equation}

\emph{(c) Second moment of an average of $\tau$ i.i.d.\ draws.} Put
$S=\frac1\tau\sum_{j=1}^\tau g(I_j)$. Expanding the squared norm and separating diagonal from
off-diagonal index pairs,
\begin{align}
  \Eidx[\norm{S}^2]
   &=\frac1{\tau^2}\,\Eidx\Big[\Big\langle\sum_{j=1}^\tau g(I_j),\sum_{l=1}^\tau g(I_l)\Big\rangle\Big]
    =\frac1{\tau^2}\,\Eidx\Big[\sum_{j=1}^\tau\norm{g(I_j)}^2
        +\sum_{\substack{j,l=1\\ j\ne l}}^\tau\inprod{g(I_j)}{g(I_l)}\Big]\nonumber\\
   &=\frac1{\tau^2}\Big[\tau\,\Eidx\norm{g(I)}^2
        +\tau(\tau-1)\,\big\langle\Eidx [g(I)],\,\Eidx [g(I)]\big\rangle\Big]\nonumber\\
   &=\frac1\tau\,\Eidx\norm{g(I)}^2
        +\frac{\tau-1}{\tau}\,\norm{\Eidx [g(I)]}^2,
  \label{eq:appB-secondmoment}
\end{align}
where the third line uses that there are $\tau$ diagonal terms (each with the same distribution as
a single draw) and $\tau(\tau-1)$ off-diagonal pairs, and that for $j\ne l$ the indices $I_j,I_l$
are independent, so $\Eidx[\inprod{g(I_j)}{g(I_l)}]=\inprod{\Eidx [g(I_j)]}{\Eidx [g(I_l)]}
=\norm{\Eidx [g(I)]}^2$.

\subsection*{B.2\quad The inner-product term}
By the definition~\eqref{eq:dk} of $d_k$ and using $\tilde b_i=b_i+\varepsilon_i$ together with
$\inprod{a_i}{\hatx}=b_i$, we first expand the inner product for a fixed batch. For a
single sampled index $i$,
\[
  \Big\langle w_i\frac{\inprod{a_i}{x_k}-\tilde b_i}{\norm{a_i}^2}a_i,\ \hatx-x_k\Big\rangle
   =w_i\frac{(r_k)_i-\varepsilon_i}{\norm{a_i}^2}\,\inprod{a_i}{\hatx-x_k}
   =-w_i\frac{(r_k)_i-\varepsilon_i}{\norm{a_i}^2}\,(r_k)_i,
\]
since $\inprod{a_i}{\hatx-x_k}=\inprod{a_i}{\hatx}-\inprod{a_i}{x_k}=b_i-\inprod{a_i}{x_k}=-(r_k)_i$.
Averaging over the batch,
\[
  \inprod{d_k}{\hatx-x_k}
   =-\frac1\tau\sum_{i\in\Bcal_k} w_i\frac{(r_k)_i^2}{\norm{a_i}^2}
    +\frac1\tau\sum_{i\in\Bcal_k} w_i\frac{\varepsilon_i\,(r_k)_i}{\norm{a_i}^2}.
\]
Taking the conditional expectation $\EE[\,\cdot\mid x_k]=\Eidx\Enoise$, the second sum vanishes because $\Enoise[\varepsilon_i]=0$ and $\varepsilon_i$ is
independent of the index draw and of $x_k$. For the first sum, applying~\eqref{eq:appB-mean} with
$g(i)=w_i(r_k)_i^2/\norm{a_i}^2$ (a scalar here) and that the $\tau$ draws are identically
distributed,
\[
  \EE[\inprod{d_k}{\hatx-x_k}\mid x_k]
   =-\Eidx\Big[w_i\frac{(r_k)_i^2}{\norm{a_i}^2}\Big]
   =-\sum_{i=1}^m p_i\,\frac{w_i}{\norm{a_i}^2}\,(r_k)_i^2.
\]
Writing this as a quadratic form, with $e_i$ the canonical basis vectors and
$(r_k)_i=e_i^\top r_k$,
\[
  \EE[\inprod{d_k}{\hatx-x_k}\mid x_k]
   =-\,r_k^\top\Big(\sum_{i=1}^m\frac{p_iw_i}{\norm{a_i}^2}\,e_ie_i^\top\Big)r_k
   =-\,r_k^\top\,\mathbf{PWD}^{-2}\,r_k,
\]
where we used $\sum_i \frac{p_iw_i}{\norm{a_i}^2}e_ie_i^\top
=\mathbf{PWD}^{-2}$. Finally, the coupling~\eqref{eq:coupling},
$\mathbf{PWD}^{-2}=(\alpha/\norm{\mathbf{A}}_F^2)\mathbf{I}$, gives
\begin{equation}\label{eq:appB-innerprod}
  \EE[\inprod{d_k}{\hatx-x_k}\mid x_k]
   =-\frac{\alpha}{\norm{\mathbf{A}}_F^2}\,r_k^\top r_k
   =-\frac{\alpha}{\norm{\mathbf{A}}_F^2}\,\norm{\mathbf{A}x_k-b}_2^2.
\end{equation}

\subsection*{B.3\quad The squared-norm term: splitting off the noise}
Decompose $d_k=d_k^{\mathrm{nf}}-d_k^{\mathrm{n}}$ into a noiseless part and a noise part,
\[
  d_k^{\mathrm{nf}}=\frac1\tau\sum_{i\in\Bcal_k}v_i,\quad
  v_i:=w_i\frac{(r_k)_i}{\norm{a_i}^2}a_i,\qquad
  d_k^{\mathrm{n}}=\frac1\tau\sum_{i\in\Bcal_k}w_i\frac{\varepsilon_i}{\norm{a_i}^2}a_i,
\]
obtained by substituting $\tilde b_i=b_i+\varepsilon_i$ into~\eqref{eq:dk} and using
$\inprod{a_i}{x_k}-b_i=(r_k)_i$. Expanding the squared norm,
\[
  \EE[\norm{d_k}^2\mid x_k]
   =\Eidx\Enoise[\norm{d_k^{\mathrm{nf}}}^2]-2\,\Eidx\Enoise\inprod{d_k^{\mathrm{nf}}}{d_k^{\mathrm{n}}}
     +\Eidx\Enoise[\norm{d_k^{\mathrm{n}}}^2].
\]
The cross-term vanishes: with the batch fixed, the only randomness in $d_k^{\mathrm{n}}$ is the
noise, which is zero-mean, so the noise expectation alone gives
\[
  \Enoise[\inprod{d_k^{\mathrm{nf}}}{d_k^{\mathrm{n}}}]
   =\frac1{\tau^2}\sum_{i,l\in\Bcal_k}\frac{w_iw_l(r_k)_i}{\norm{a_i}^2\norm{a_l}^2}
      \,\Enoise[\varepsilon_l]\,\inprod{a_i}{a_l}=0.
\]
Since $d_k^{\mathrm{nf}}$ does not depend on the noise, $\Eidx\Enoise[\norm{d_k^{\mathrm{nf}}}^2]
=\Eidx[\norm{d_k^{\mathrm{nf}}}^2]$, and we treat the two surviving parts separately.

\subsection*{B.4\quad The noiseless part}
Apply~\eqref{eq:appB-secondmoment} with $g(i)=v_i=w_i(r_k)_i a_i/\norm{a_i}^2$:
\begin{equation}\label{eq:appB-nf-split}
  \Eidx[\norm{d_k^{\mathrm{nf}}}^2]
   =\frac1\tau\,\Eidx[\norm{v_i}^2]+\frac{\tau-1}{\tau}\,\norm{\Eidx [v_i]}^2.
\end{equation}
We compute each moment as a quadratic form in $r_k$.

\emph{First moment (diagonal term).} Since $\norm{a_i}^2=a_i^\top a_i$,
\[
  \norm{v_i}^2=w_i^2\frac{(r_k)_i^2}{\norm{a_i}^4}\,\norm{a_i}^2
             =w_i^2\frac{(r_k)_i^2}{\norm{a_i}^2},
\]
so by~\eqref{eq:appB-mean},
\[
  \Eidx[\norm{v_i}^2]
   =\sum_{i=1}^m p_i\,\frac{w_i^2}{\norm{a_i}^2}\,(r_k)_i^2
   =r_k^\top\Big(\sum_{i=1}^m\frac{p_iw_i^2}{\norm{a_i}^2}e_ie_i^\top\Big)r_k
   =\big\langle r_k,\,\mathbf{PW}^2\mathbf{D}^{-2}r_k\big\rangle,
\]
where $\sum_i \frac{p_iw_i^2}{\norm{a_i}^2}e_ie_i^\top=\mathbf{P W}^2 \mathbf{D}^{-2}$.

\emph{Second moment (off-diagonal term).} Using $(r_k)_i=e_i^\top r_k=e_i^\top(\mathbf{A}x_k-b)$ and
$a_i=\mathbf{A}^\top e_i$,
\[
  \Eidx [v_i]
   =\sum_{i=1}^m p_i\,w_i\frac{(r_k)_i}{\norm{a_i}^2}a_i
   =\sum_{i=1}^m \frac{p_iw_i}{\norm{a_i}^2}\,\mathbf{A}^\top e_i\,(e_i^\top r_k)
   =\mathbf{A}^\top\Big(\sum_{i=1}^m\frac{p_iw_i}{\norm{a_i}^2}e_ie_i^\top\Big)r_k
   =\mathbf{A}^\top \mathbf{PWD}^{-2}r_k.
\]
Therefore
\begin{align*}
  \norm{\Eidx [v_i]}^2
   &=\big(\mathbf{A}^\top \mathbf{PWD}^{-2}r_k\big)^\top\big(\mathbf{A}^\top \mathbf{PWD}^{-2}r_k\big)
   =r_k^\top\,\mathbf{PWD}^{-2}\,\mathbf{A A}^\top\,\mathbf{PWD}^{-2}\,r_k \\
   &=\big\langle r_k,\,\mathbf{PWD}^{-2}\mathbf{AA}^\top \mathbf{PWD}^{-2}r_k\big\rangle,
\end{align*}
using $(\mathbf{PWD}^{-2})^\top=\mathbf{PWD}^{-2}$ (diagonal matrices are symmetric).

\emph{Assembling the bracket.} Substituting both moments into~\eqref{eq:appB-nf-split},
\begin{equation}\label{eq:appB-bracket}
  \Eidx[\norm{d_k^{\mathrm{nf}}}^2]
   =\Big\langle r_k,\ \underbrace{\Big(\tfrac1\tau\,\mathbf{PW}^2\mathbf{D}^{-2}
       +\tfrac{\tau-1}\tau\,\mathbf{PWD}^{-2}\mathbf{AA}^\top \mathbf{PWD}^{-2}\Big)}_{=:\,\mathbf{Q}}\ r_k\Big\rangle.
\end{equation}
We now simplify $\mathbf{Q}$ using the coupling~\eqref{eq:coupling}. In the second summand, replace
\emph{one} factor $\mathbf{PWD}^{-2}$ by $(\alpha/\norm{\mathbf{A}}_F^2)\mathbf{I}$ and leave the other, and in the first
summand write $\mathbf{PW}^2\mathbf{D}^{-2}=\mathbf{W}\cdot \mathbf{PWD}^{-2}$ and replace $\mathbf{PWD}^{-2}$ likewise:
\[
  \mathbf{PW}^2\mathbf{D}^{-2}=\mathbf{W}\,\mathbf{PWD}^{-2}=\frac{\alpha}{\norm{\mathbf{A}}_F^2}\,\mathbf{W}, 
\]
\[
\mathbf{PWD}^{-2}\mathbf{AA}^\top \mathbf{PWD}^{-2}=\frac{\alpha}{\norm{\mathbf{A}}_F^2}\,\mathbf{PWD}^{-2}\mathbf{AA}^\top
   =\frac{\alpha^2}{\norm{\mathbf{A}}_F^4}\,\mathbf{AA}^\top.
\]
Hence
\[
  \mathbf{Q}=\frac1\tau\cdot\frac{\alpha}{\norm{\mathbf{A}}_F^2}\,\mathbf{W}
     +\frac{\tau-1}{\tau}\cdot\frac{\alpha^2}{\norm{\mathbf{A}}_F^4}\,\mathbf{AA}^\top
   =\frac{\alpha}{\norm{\mathbf{A}}_F^2}\Big(\frac{1}{\tau}\,\mathbf{W}
       +\frac{\alpha}{\norm{\mathbf{A}}_F^2}\,\frac{\tau-1}{\tau}\,\mathbf{AA}^\top\Big)
   =\frac{2\alpha}{\norm{\mathbf{A}}_F^2}\,\mathbf{T},
\]
where the last equality is the definition~\eqref{eq:Tdef} of
$\mathbf{T}=\tfrac{1}{2\tau}\mathbf{W}+\tfrac{\alpha}{2\norm{\mathbf{A}}_F^2}(1-\tfrac1\tau)\mathbf{AA}^\top$ (factor $\tfrac12$ pulled
out). Substituting into~\eqref{eq:appB-bracket} and bounding the Rayleigh quotient by the largest
eigenvalue,
\begin{equation}\label{eq:appB-nf-final}
  \Eidx[\norm{d_k^{\mathrm{nf}}}^2]
   =\frac{2\alpha}{\norm{\mathbf{A}}_F^2}\,\langle r_k,\,\mathbf{T} r_k\rangle
   \le\frac{2\alpha}{\norm{\mathbf{A}}_F^2}\,\smax(\mathbf{T})\,\norm{r_k}_2^2,
\end{equation}
where $\smax(\mathbf{T})=\norm{\mathbf{T}}_2$ since $\mathbf{T}$ is symmetric positive semidefinite (it is a nonnegative
combination of the positive semidefinite matrices $\mathbf{W}$ and $\mathbf{AA}^\top$).

\subsection*{B.5\quad The noise part}
Write $d_k^{\mathrm{n}}=\frac1\tau\sum_{i\in\Bcal_k}u_i$ with
$u_i:=w_i\varepsilon_i a_i/\norm{a_i}^2$. We compute the conditional second moment
$\Eidx\Enoise[\norm{d_k^{\mathrm{n}}}^2]$, taking the noise expectation $\Enoise$ first (with the
batch fixed) and the batch expectation $\Eidx$ second. Expanding the squared norm and separating
the diagonal from the off-diagonal index pairs,
\[
  \Eidx\Enoise[\norm{d_k^{\mathrm{n}}}^2]
   =\frac1{\tau^2}\,\Eidx\,\Enoise\Big[\sum_{j=1}^\tau\norm{u_{I_j}}^2
       +\sum_{\substack{j,l=1\\ j\ne l}}^\tau\inprod{u_{I_j}}{u_{I_l}}\Big].
\]
For an off-diagonal pair $j\ne l$, the indices $I_j,I_l$ are distinct draws so the noises
$\varepsilon_{I_j},\varepsilon_{I_l}$ are independent and zero-mean; the noise expectation alone
already annihilates the term,
\[
  \Enoise[\inprod{u_{I_j}}{u_{I_l}}]
   =\frac{w_{I_j}w_{I_l}\,\Enoise[\varepsilon_{I_j}]\,\Enoise[\varepsilon_{I_l}]}
        {\norm{a_{I_j}}^2\norm{a_{I_l}}^2}\,\inprod{a_{I_j}}{a_{I_l}}=0,
\]
since $\Enoise[\varepsilon_{I_j}]=\Enoise[\varepsilon_{I_l}]=0$. Only the $\tau$ diagonal terms
survive. For each, $\Enoise[\norm{u_{I_j}}^2]
=w_{I_j}^2\,\Enoise[\varepsilon_{I_j}^2]\,\norm{a_{I_j}}^2/\norm{a_{I_j}}^4
=w_{I_j}^2\sigma_{I_j}^2/\norm{a_{I_j}}^2$, using $\Enoise[\varepsilon_{I_j}^2]=\sigma_{I_j}^2$.
Hence
\[
  \Eidx\Enoise[\norm{d_k^{\mathrm{n}}}^2]
   =\frac1{\tau^2}\cdot\tau\,\Eidx\Big[\frac{w_I^2\sigma_I^2}{\norm{a_I}^2}\Big]
   =\frac1\tau\,\Eidx\Big[\frac{w_I^2\sigma_I^2}{\norm{a_I}^2}\Big].
\]
Applying the batch-mean identity~\eqref{eq:appB-mean},
\begin{equation}\label{eq:appB-noise-final}
  \Eidx\Enoise[\norm{d_k^{\mathrm{n}}}^2]
   =\frac1\tau\sum_{i=1}^m\frac{p_iw_i^2\sigma_i^2}{\norm{a_i}^2}
   =\frac1\tau\,\Tr\!\big(\mathbf{PW}^2\mathbf{\Sigma D}^{-2}\big),
\end{equation}
where the last equality recognises $\sum_i p_iw_i^2\sigma_i^2/\norm{a_i}^2$ as the trace of the
diagonal matrix \newline $\mathbf{PW}^2\mathbf{\Sigma D}^{-2}=\Diag\big(p_iw_i^2\sigma_i^2/\norm{a_i}^2\big)$.

\subsection*{B.6\quad Assembling the one-step bound}
Insert the inner-product identity~\eqref{eq:appB-innerprod} and the two squared-norm
results~\eqref{eq:appB-nf-final}--\eqref{eq:appB-noise-final} into the deterministic descent
inequality of Lemma~\ref{lem:onestep}, $\Df{x_{k+1}^*}{x_{k+1}}{\hatx}\le\Df{x_k^*}{x_k}{\hatx}
+\eta_k\inprod{d_k}{\hatx-x_k}+\tfrac{\eta_k^2}{2}\norm{d_k}^2$, and take the \emph{conditional}
expectation $\EE[\,\cdot\mid x_k]=\Eidx\Enoise$ (under which $\Df{x_k^*}{x_k}{\hatx}$ and
$\norm{\mathbf{A}x_k-b}_2^2$ are constant):
\begin{align*}
  \EE\big[\Df{x_{k+1}^*}{x_{k+1}}{\hatx}\mid x_k\big]
   &\le\Df{x_k^*}{x_k}{\hatx}
     -\eta_k\frac{\alpha}{\norm{\mathbf{A}}_F^2}\norm{\mathbf{A}x_k-b}_2^2\\
   &\qquad+\frac{\eta_k^2}{2}\Big(\frac{2\alpha}{\norm{\mathbf{A}}_F^2}\smax(\mathbf{T})\norm{\mathbf{A}x_k-b}_2^2
       +\frac1\tau\Tr(\mathbf{PW}^2\mathbf{\Sigma D}^{-2})\Big)\\
   &=\Df{x_k^*}{x_k}{\hatx}
     -\frac{\alpha\eta_k}{\norm{\mathbf{A}}_F^2}\big(1-\eta_k\smax(\mathbf{T})\big)\norm{\mathbf{A}x_k-b}_2^2
     +\frac{\eta_k^2}{2\tau}\Tr(\mathbf{PW}^2\mathbf{\Sigma D}^{-2}),
\end{align*}
where we used $\Eidx\Enoise[\inprod{d_k}{\hatx-x_k}]$ from~\eqref{eq:appB-innerprod},
$\Eidx[\norm{d_k^{\mathrm{nf}}}^2]$ from~\eqref{eq:appB-nf-final}, and
$\Eidx\Enoise[\norm{d_k^{\mathrm{n}}}^2]$ from~\eqref{eq:appB-noise-final}. Now apply the error bound
(Assumption~\ref{ass:errorbound}), $\norm{\mathbf{A}x_k-b}_2^2\ge\theta(\hatx)\,\Df{x_k^*}{x_k}{\hatx}$,
with $\gamma=\theta(\hatx)/\norm{\mathbf{A}}_F^2$. The coefficient
$\tfrac{\alpha\eta_k}{\norm{\mathbf{A}}_F^2}(1-\eta_k\smax(\mathbf{T}))$ is nonnegative for the admissible step
$\eta_k\le1/\smax(\mathbf{T})$, so multiplying the residual lower bound through preserves the inequality
and gives the conditional bound
\[
  \EE\big[\Df{x_{k+1}^*}{x_{k+1}}{\hatx}\mid x_k\big]
   \le\big(1-\gamma\alpha\eta_k(1-\eta_k\smax(\mathbf{T}))\big)\,\Df{x_k^*}{x_k}{\hatx}
     +\frac{\eta_k^2}{2\tau}\Tr(\mathbf{PW}^2\mathbf{\Sigma D}^{-2}).
\]
Finally, taking the total expectation $\Etot$ of both sides and using the tower rule
$\Etot[\,\cdot\,]=\Etot\big[\EE[\,\cdot\mid x_k]\big]$ together with the linearity of $\Etot$
yields
\[
  \Etot\,[\Df{x_{k+1}^*}{x_{k+1}}{\hatx}]
   \le\big(1-\gamma\alpha\eta_k(1-\eta_k\smax(\mathbf{T}))\big)\,\Etot\,[\Df{x_k^*}{x_k}{\hatx}]
     +\frac{\eta_k^2}{2\tau}\Tr(\mathbf{PW}^2\mathbf{\Sigma D}^{-2}),
\]
which is exactly the statement of Lemma~\ref{lem:descent}. \hfill$\square$

\section{Spectral bounds: proof of Lemma~\ref{lem:Tbound}}\label{app:spectral}
For the upper bound, the spectral-norm triangle inequality applied to the two summands of
$\mathbf{T}=\tfrac1{2\tau}\mathbf{W}+\tfrac{\alpha}{2\norm{\mathbf{A}}_F^2}(1-\tfrac1\tau)\mathbf{AA}^\top$ gives
\[
  \smax(\mathbf{T})\le\frac1{2\tau}\smax(\mathbf{W})+\frac{\alpha}{2\norm{\mathbf{A}}_F^2}\Big(1-\frac1\tau\Big)\smax^2(\mathbf{A}),
\]
since $\smax(\mathbf{AA}^\top)=\smax^2(\mathbf{A})$. For $\mathbf{W}=\alpha \mathbf{I}$ the two summands are simultaneously
diagonalised by the eigenbasis of $\mathbf{AA}^\top$: writing $\mathbf{AA}^\top=\sum_\ell\sigma_\ell^2 u_\ell
u_\ell^\top$,
\[
  \mathbf{T}=\sum_\ell\Big(\frac{\alpha}{2\tau}
     +\frac{\alpha}{2\norm{\mathbf{A}}_F^2}\Big(1-\frac1\tau\Big)\sigma_\ell^2\Big)u_\ell u_\ell^\top,
\]
so the eigenvalues of $\mathbf{T}$ are increasing affine functions of $\sigma_\ell^2$ and the largest is
attained at $\sigma_\ell^2=\smax^2(\mathbf{A})$, giving the stated identity
$\smax(\mathbf{T})=\tfrac{\alpha}{2\tau}\big(1+(\tau-1)\smax^2(\mathbf{A})/\norm{\mathbf{A}}_F^2\big)$. The monotonicity in
$\tau$ and the limit then follow as in the proof of Proposition~\ref{prop:taumono} in
\S\ref{sec:spectral}. \hfill$\square$

\section{Full proof of Theorem~\ref{thm:main}}\label{app:thm-proof}
We give the complete argument abbreviated in \S\ref{sec:thmproof}, following the structure of the
one-step bound (Lemma~\ref{lem:descent}). To obtain a rate $q\in(0,1)$ we need $1-\eta_k\smax(\mathbf{T})>0$, that is $\eta_k<1/\smax(\mathbf{T})$. From
Lemma~\ref{lem:descent}, it holds:
\begin{equation}\label{eq:app-recurrence}
  \Etot\,[\Df{x_{k+1}^*}{x_{k+1}}{\hatx}]
   \le\big(1-\alpha\gamma\eta_k(1-\eta_k\smax(\mathbf{T}))\big)\,\Etot\,[\Df{x_k^*}{x_k}{\hatx}]
     +\frac{\eta_k^2}{2\tau}\Tr(\mathbf{PW}^2\mathbf{\Sigma D}^{-2}).
\end{equation}

\paragraph{Step 1: unrolling the recurrence.}
Let $\eta=(\eta_0,\eta_1,\dots)$ collect the step sizes, and abbreviate
$\rho_j:=1-\alpha\gamma\eta_j(1-\eta_j\smax(T))$. Iterating~\eqref{eq:app-recurrence} from $0$ to
$k$ and using that $\Etot\,[\Df{x_0^*}{x_0}{\hatx}]=\Df{x_0^*}{x_0}{\hatx}$ is constant, we obtain
\begin{align}
  \Etot\,[\Df{x_{k+1}^*}{x_{k+1}}{\hatx}]
   &\le\prod_{j=0}^{k}\rho_j\;\Etot\,[\Df{x_0^*}{x_0}{\hatx}]
     +\frac{\Tr(\mathbf{PW}^2\mathbf{\Sigma D}^{-2})}{\tau}\sum_{j=0}^{k}\frac{\eta_j^2}{2}\prod_{i=j+1}^{k}\rho_i,
   \label{eq:app-51}
\end{align}
with the convention that the empty product $\prod_{i=k+1}^{k}\rho_i=1$. Dividing
through by $\tfrac1\tau\Tr(\mathbf{PW}^2\mathbf{\Sigma D}^{-2})$ gives the closed form of the error sequence,
\begin{equation}\label{eq:app-52}
  D_k(\eta)=\prod_{j=0}^{k}\rho_j\;\frac{\tau\,\Df{x_0^*}{x_0}{\hatx}}{\Tr(\mathbf{PW}^2\mathbf{\Sigma D}^{-2})}
            +\sum_{j=0}^{k}\frac{\eta_j^2}{2}\prod_{i=j+1}^{k}\rho_i,
\end{equation}
so that~\eqref{eq:app-recurrence} reads
\begin{equation}\label{eq:app-Dk}
  \Etot\,[\Df{x_{k+1}^*}{x_{k+1}}{\hatx}]\le D_k(\eta)\cdot\frac{1}{\tau}\Tr(\mathbf{PW}^2\mathbf{\Sigma D}^{-2}),
\end{equation}
which is \eqref{eq:app-51} restated. From eq~\eqref{eq:app-Dk}, It holds that $D_k(\eta)$ satisfies the recurrence
\begin{equation}\label{eq:app-Dk1}
  D_k(\eta)=\big(1-\alpha\gamma\eta_k(1-\eta_k\smax(\mathbf{T}))\big)D_{k-1}(\eta)+\frac{\eta_k^2}{2}.
\end{equation}

\paragraph{Step 2: the optimal step size.}
Using the empty-product convention, $D_k(\eta)$ obeys the recurrence~\eqref{eq:app-Dk1}.
Differentiating with respect to the single free variable $\eta_k$,
\[
  \frac{\partial D_k(\eta)}{\partial\eta_k}
   =\big(-\alpha\gamma+2\alpha\gamma\eta_k\smax(\mathbf{T})\big)D_{k-1}(\eta)+\eta_k=0,
\]
which gives the optimal adaptive step size
\begin{equation}\label{eq:app-54}
  \eta_k=\frac{\alpha\gamma D_{k-1}(\eta)}{1+2\alpha\gamma\smax(T)D_{k-1}(\eta)}.
\end{equation}
Since $\partial^2 D_k(\eta)/\partial\eta_k^2=2\alpha\gamma\smax(\mathbf{T})D_{k-1}(\eta)+1>0$, the value
\eqref{eq:app-54} minimises $D_k(\eta)$. As $D_{k-1}(\eta)$ is unavailable in closed form, we
estimate it, following~\cite{marshall2023optimal,tondji2024adaptive}, by the deterministic
sequence $\beta_k:=D_{k-1}(\eta)$ with
$\beta_0=\tau\,\Df{x_0^*}{x_0}{\hatx}/\Tr(\mathbf{PW}^2\mathbf{\Sigma D}^{-2})$, giving
\begin{equation}\label{eq:app-55}
  \eta_k=\frac{\alpha\gamma\beta_k}{1+2\alpha\gamma\smax(\mathbf{T})\beta_k}.
\end{equation}

\paragraph{Step 3: admissibility.}
The requirement $\eta_k<1/\smax(\mathbf{T})$ holds automatically:
\[
  \frac{\alpha\gamma\beta_k}{1+2\alpha\gamma\smax(\mathbf{T})\beta_k}<\frac{1}{\smax(\mathbf{T})}
  \iff \alpha\gamma\smax(\mathbf{T})\beta_k<1+2\alpha\gamma\smax(\mathbf{T})\beta_k
  \iff 0<1+\alpha\gamma\smax(\mathbf{T})\beta_k,
\]
which is always true since $\alpha,\gamma,\smax(\mathbf{T}),\beta_k\ge0$.

\paragraph{Step 4: telescoping the auxiliary sequence.}
Substituting~\eqref{eq:app-55} into~\eqref{eq:app-Dk1} and using the identity
$\eta_k\big(1+2\alpha\gamma\smax(\mathbf{T})\beta_k\big)=\alpha\gamma\beta_k$,
\begin{align*}
  \beta_{k+1}
   &=\big(1-\alpha\gamma\eta_k(1-\eta_k\smax(\mathbf{T}))\big)\beta_k+\frac{\eta_k^2}{2}\\
   &=\beta_k-\alpha\gamma\beta_k\eta_k+\frac{\eta_k^2}{2}\big(1+2\alpha\gamma\smax(\mathbf{T})\beta_k\big)\\
   &=\beta_k-\alpha\gamma\beta_k\eta_k+\frac{\eta_k^2}{2}\cdot\frac{\alpha\gamma\beta_k}{\eta_k}\\
   &=\beta_k-\alpha\gamma\beta_k\eta_k+\frac{\alpha\gamma\beta_k\eta_k}{2}
    =\beta_k\Big(1-\frac{\alpha\gamma\eta_k}{2}\Big).
\end{align*}
Thus $\beta_k$ is deterministic and strictly decreasing and since $\eta_k>0$ for all $k$, it
converges to zero. 

\paragraph{Step 5: conclusion.}
Combining~\eqref{eq:app-Dk} with the identification $\beta_k=D_{k-1}(\eta)$ and the
$1$-strong-convexity inequality $\norm{x_k-\hatx}_2^2\le 2\,\Df{x_k^*}{x_k}{\hatx}$ yields
\[
  \Etot\,[\norm{x_k-\hatx}_2^2]\le 2\,\Etot\,[\Df{x_k^*}{x_k}{\hatx}]
   \le\frac{2\,\Tr(\mathbf{PW}^2\mathbf{\Sigma D}^{-2})}{\tau}\,\beta_k,
\]
which is~\eqref{eq:main-rate}. \hfill$\square$

\section{Proof of Corollary~\ref{cor:asymp}}
\label{sec:app-cor-asymp}

The recursion~\ref{eq:etaopt} within the expression of $\eta_k$ can be written as 
\begin{align*}
    \dfrac{\gamma\beta_{k+1}-\gamma\beta_{k}}{\gamma}&=\beta_{k+1}-\beta_{k}=-\dfrac{\alpha\gamma\beta_k}{2}\eta_k=-\dfrac{1}{2}\; \dfrac{(\alpha\gamma \beta_k)^2}{1 +2\alpha\gamma\sigma_{\max}\mathbf{(T)}\beta_k}.
\end{align*}
Setting $u_k:=\gamma\beta_k$, we obtain that
\begin{align*}
    \dfrac{u_{k+1}-u_k}{\gamma}&=-\dfrac{\alpha^2}{2}\dfrac{u_k^2}{1 +2\alpha\sigma_{\max}\mathbf{(T)}u_k}=-\dfrac{\alpha}{4\sigma_{\max}\mathbf{(T)}}\dfrac{u_k^2}{u_k + \dfrac{1}{2\alpha\sigma_{\max}\mathbf{(T)}}}.
\end{align*}
This new recursion is  a forward-Euler approximation to the ordinary differential equation $$\dot{u}=-\dfrac{\alpha}{4\sigma_{\max}\mathbf{(T)}}\dfrac{u^2}{u + \dfrac{1}{2\alpha\sigma_{\max}\mathbf{(T)}}}.$$
It is straightforward to verify that the solution of this differential equation is
$$u(t)=\dfrac{1}{2\alpha\sigma_{\max}\mathbf{(T)}W_0\left(c_0\exp\left(\dfrac{\alpha t}{4\sigma_{\max}\mathbf{(T)}}\right)\right)}\quad \text{with}\quad c_0=\dfrac{1}{2\alpha\sigma_{\max}\mathbf{(T)}u_0}\exp\left(\dfrac{1}{2\alpha\sigma_{\max}\mathbf{(T)}u_0}\right)$$

the initial condition and $W_0$ is the Lambert-$W_0$ function, that is, the inverse of the function $x \mapsto xe^x$. In addition, $u$ is a convex function, because if we set $c_1=\alpha/\left(4\sigma_{\max}\mathbf{(T)}\right)$ and $c_2=1/\left(2\alpha\sigma_{\max}\mathbf{(T)}\right)$, it is easy to see that 
$\ddot{u}(t)=c_1^2u^3(u+2c_2)/(u+c_2)^3 \geq 0$ when $u_0:=u(0)\geqslant 0$. We deduce that the error decreases rapidly at first and then slows down as it approaches the asymptote. Since the forward Euler method is a lower bound for convex functions, this means that
\begin{align*}
    \beta_k &\leq  \dfrac{1}{2\alpha\gamma\sigma_{\max}\mathbf{(T)}W_0\left(c_0\exp\left(\dfrac{\alpha k}{4\sigma_{\max}\mathbf{(T)}}\right)\right)}\quad \text{with}\quad c_0=\dfrac{1}{2\alpha\gamma\sigma_{\max}(T)\beta_0}\exp\left(\dfrac{1}{2\alpha\gamma\sigma_{\max}\mathbf{(T)}\beta_0}\right) \label{beta}. 
\end{align*}

Now, let denote by $h(k) = \dfrac{\alpha}{\tau}\cdot\Tr(\mathbf{W\Sigma}) \cdot \beta_k$ so that by~\eqref{eq:main-rate},
$\Etot\,[\norm{x_k-\hatx}_2^2]\le 2 h(k)/\|\mathbf{A}\|_F^2$. Define $\bar{\sigma}=\sum_{i=1}^{m}\sigma_i\|a_i\|$; then we have 
\begin{align*}
    \sigma \to 0 \Leftrightarrow \sigma_i \to 0 \,\, \forall\,i \Leftrightarrow \bar \sigma \to 0.
\end{align*}

$\textit{Uniform weights:}$ We have the following
\begin{align*}
\Tr(\mathbf{W\Sigma})=\alpha \sigma^2,\; \beta_0 = \tau\|\mathbf{A}\|_F^2 D_f^{x_0^*}(x_0, \hat{x})/(\alpha^2\sigma^2) \; \text{and}\; h(k) =  \dfrac{\alpha\sigma^2}{2\tau\gamma\sigma_{\max}(\mathbf{T})W_0\left(c_0\exp\left(\dfrac{\alpha\gamma k}{4\sigma_{\max}(\mathbf{T})}\right)\right)}
\end{align*}
where
\begin{footnotesize}
  \begin{align*}
    c_0&=\dfrac{\alpha\sigma^2}{2\gamma\tau\sigma_{\text{max}}(\mathbf{T})\|\mathbf{A}\|_F^2 D_f^{x_0^*}(x_0, \hat{x})}\exp\left(\dfrac{\alpha\sigma^2}{2\gamma\tau\sigma_{\text{max}}(\mathbf{T})\|\mathbf{A}\|_F^2 D_f^{x_0^*}(x_0, \hat{x})}\right).
\end{align*}  
\end{footnotesize}

  Define $$c=2\gamma\tau\sigma_{\text{max}}(\mathbf{T})\|\mathbf{A}\|_F^2,\quad a=2\tau\gamma\sigma_{\max}(\mathbf{T}),\quad  \text{and}\quad v=\dfrac{\alpha\gamma }{4\sigma_{\max}(\mathbf{T})}$$
  In the regime $\sigma \to 0$ the constant $c_0 \to 0$, so the argument $c_0 e^{vk} \to 0$ at any fixed $k$; the principal branch then admits the small-argument expansion  $W_0(x)=x+\mathcal{O}(x^2)$ as $x \rightarrow 0$. 
  Thus,
  \begin{align*}
   h(k)=\dfrac{\alpha\sigma^2}{aW_0(c_0 e^{vk})}
   &=\dfrac{\alpha\sigma^2}{\dfrac{a\alpha \sigma^2}{cD_f^{x_0^*}(x_0, \hat{x})} e^{vk}e^{\dfrac{\alpha \sigma^2}{cD_f^{x_0^*}(x_0,\hat{x})}} \; +\mathcal{O}\left(\sigma^4\right)}\\
   &=\dfrac{cD_f^{x_0^*}(x_0, \hat{x})}{a}e^{-vk}\quad+\quad \mathcal{O}\left(\sigma^2\right)\\
   &=\|\mathbf{A}\|_F^2\exp\left(-\dfrac{\alpha\gamma k}{4\sigma_{\max}(\mathbf{T})}\right)D_f^{x_0^*}(x_0, \hat{x}) + \mathcal{O}\left(\sigma^2\right).
  \end{align*}

$\textit{Optimal noise-aware weights}:$ We have the following :
$\Tr(\mathbf{W\Sigma})=\dfrac{\alpha \bar\sigma^2}{\|\mathbf{A}\|_F^2},$
\begin{align*}
    h(k) &=  \dfrac{\alpha\bar{\sigma}^2/\|\mathbf{A}\|_F^2}{2\tau\gamma\sigma_{\max}(\mathbf{T})W_0\left(c_0\exp\left(\dfrac{\alpha\gamma k}{4\sigma_{\max}(\mathbf{T})}\right)\right)}\quad\text{and}\quad \beta_0 = \tau\|\mathbf{A}\|_F^4 D_f^{x_0^*}(x_0, \hat{x})/\left(\alpha\bar{\sigma}^2\right).
\end{align*}
with
\begin{footnotesize}
  \begin{align*}
    c_0=\dfrac{\alpha\bar{\sigma}^2}{2\gamma\tau\sigma_{\text{max}}(\mathbf{T})\|\mathbf{A}\|_F^4 D_f^{x_0^*}(x_0, \hat{x})}\exp\left(\dfrac{\alpha\bar{\sigma}^2}{2\gamma\tau\sigma_{\text{max}}(\mathbf{T})\|\mathbf{A}\|_F^4 D_f^{x_0^*}(x_0, \hat{x})}\right).
\end{align*} 
\end{footnotesize}
    The proof is similar to the previous one. To see this, rewrite $h(k)$  as
    $$h(k)=\dfrac{\alpha\bar{\sigma}^2/\|\mathbf{A}\|_F^2}{aW_0(c_0 e^{vk})}\quad \text{with}\quad c_0=\dfrac{\bar{\alpha\sigma}^2}{c D_f^{x_0^*}(x_0, \hat{x})}\exp\left(\dfrac{\bar{\alpha\sigma}^2}{c D_f^{x_0^*}(x_0, \hat{x})}\right) $$
where $$c=2\gamma\tau\sigma_{\text{max}}(\mathbf{T})\|\mathbf{A}\|_F^4,\quad a=2\tau\gamma\sigma_{\max}(\mathbf{T}),\quad  \text{and}\quad v=\dfrac{\alpha\gamma }{4\sigma_{\max}(\mathbf{T})};$$

Thus,
  \begin{align*}
   h(k)=\dfrac{\alpha\bar{\sigma}^2/\|\mathbf{A}\|_F^2}{aW_0(c_0 e^{vk})}
   &=\dfrac{\alpha\bar{\sigma}^2/\|\mathbf{A}\|_F^2}{\dfrac{a\alpha \bar\sigma^2}{cD_f^{x_0^*}(x_0, \hat{x})} e^{vk}e^{\dfrac{\alpha \bar\sigma^2}{cD_f^{x_0^*}(x_0,\hat{x})}} \; +\mathcal{O}\left(\bar\sigma^4\right)}\\
   &=\dfrac{cD_f^{x_0^*}(x_0, \hat{x})}{a \|\mathbf{A}\|_F^2}e^{-vk}\quad+\; \mathcal{O}\left(\bar\sigma^2\right)\\
   &=\|\mathbf{A}\|_F^2\exp\left(-\dfrac{\alpha\gamma k}{4\sigma_{\max}(\mathbf{T})}\right)D_f^{x_0^*}(x_0, \hat{x}) \; + \mathcal{O}\left(\bar\sigma^2\right).
  \end{align*}

Now, we have a look at the case $k \to \infty$. The principal branch of the Lambert-$W_0$ function has an asymptotic expansion
\begin{align*}
    W_0(e^{k})&=k - ln(k) + \mathcal{O}\left(\dfrac{ln(k)}{k}\right)\quad k \rightarrow \infty;
\end{align*}
For simplicity, define 
\begin{align*}
    K = \dfrac{\text{Tr}(\mathbf{W\Sigma})}{2\tau\gamma \sigma_{\max}(\mathbf{T})}\quad \text{and} \quad v=\dfrac{\alpha\gamma}{4 \sigma_{\max}(\mathbf{T})}.
\end{align*}
Plugging these into $h(k),$ we have
\begin{align*}
    h(k)=\dfrac{K}{vk\left(1+\mathcal{O}\left(\dfrac{ln(k)}{k}\right)\right)} =\frac{2\text{Tr}(\mathbf{W\Sigma})) }{\alpha\tau\gamma^2 k}\left(1+\mathcal{O}\left(\dfrac{ln(k)}{k}\right)\right) = \begin{cases}
        \frac{2\sigma^2 }{\tau\gamma^2 k} ,\quad \text{If}\,\, \mathbf{W} = \alpha \mathbf{I}\\\\
        \frac{2\bar\sigma^2 }{\tau k\gamma^2 \|\mathbf{A}\|_F^2}  ,\quad \text{Else}
    \end{cases}
\end{align*} 
so that by~\eqref{eq:main-rate},
\[
\Etot\,[\norm{x_k-\hatx}_2^2]\le 2h(k)/\|\mathbf{A}\|_F^2 = \begin{cases}
        \frac{4\sigma^2 }{\tau k \gamma^2  \|\mathbf{A}\|_F^2} ,\quad \text{If}\,\, \mathbf{W} = \alpha \mathbf{I}\\\\
        \frac{4\bar\sigma^2 }{\tau k\gamma^2 \|\mathbf{A}\|_F^4}  ,\quad \text{Else}
    \end{cases} = \frac{4}{\tau \alpha^2 \gamma^2 k }\Tr(\mathbf{PW}^2\mathbf{\Sigma D}^{-2})
\]
and
\begin{align*}
    \eta_k =\frac{\alpha\gamma\beta_k}{1+2\alpha\gamma\,\smax(\mathbf{T}) \beta_k} = \frac{\alpha\gamma\tau h(k)/\left(\alpha\text{Tr}(\mathbf{W\Sigma}))\right)}{1+2\alpha\gamma\,\smax(\mathbf{T})\tau h(k)/\left(\alpha\text{Tr}(\mathbf{W\Sigma}))\right)} = \frac{2}{\alpha\gamma k+4\,\smax(\mathbf{T}) }.
\end{align*}
\hfill$\square$\\
It is worth mentioning that all the above results are more general and therefore results from~\cite{marshall2023optimal,tondji2024adaptive} can be obtained from our results.

\section{Lagrange-multiplier derivation of the optimal distribution}\label{app:lag}

\begin{proposition}\label{prop:lag}
For $c_1,\dots,c_m>0$, the minimum of $\sum_i c_i/p_i$ over $\{p:p_i\ge0,\sum_i p_i=1\}$ is
attained at $p_i^{\mathrm{opt}}=\sqrt{c_i}/\sum_j\sqrt{c_j}$ and equals $(\sum_i\sqrt{c_i})^2$.
\end{proposition}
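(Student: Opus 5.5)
The plan is a one-line Cauchy--Schwarz bound for the lower estimate, followed by a direct check that the stated point attains it. Write $c_i=(\sqrt{c_i})^2$. For any feasible $p$ with all $p_i>0$, factor $\sqrt{c_i}=\frac{\sqrt{c_i}}{\sqrt{p_i}}\cdot\sqrt{p_i}$ and apply Cauchy--Schwarz:
\[
\Big(\sum_i\sqrt{c_i}\Big)^2=\Big(\sum_i\frac{\sqrt{c_i}}{\sqrt{p_i}}\sqrt{p_i}\Big)^2\le\Big(\sum_i\frac{c_i}{p_i}\Big)\Big(\sum_i p_i\Big)=\sum_i\frac{c_i}{p_i}.
\]
This shows that $(\sum_i\sqrt{c_i})^2$ is a lower bound on the objective over the interior of the simplex.

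Next, the boundary needs care. If some $p_i=0$, then $c_i>0$ forces $c_i/p_i=+\infty$, read as the extended value. Such points therefore never beat the bound, and the minimisation may be restricted to $p_i>0$. I expect this convention to be the only delicate point. The rest is routine.

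For attainment, substitute $p_i^{\mathrm{opt}}=\sqrt{c_i}/\sum_j\sqrt{c_j}$. It is feasible, since it is positive and sums to $1$. It also gives $c_i/p_i^{\mathrm{opt}}=\sqrt{c_i}\sum_j\sqrt{c_j}$, and summing over $i$ yields exactly $(\sum_i\sqrt{c_i})^2$.

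Uniqueness follows from the equality case of Cauchy--Schwarz. Equality requires $\sqrt{c_i}/\sqrt{p_i}\propto\sqrt{p_i}$, that is $p_i\propto\sqrt{c_i}$, and normalisation then pins down $p^{\mathrm{opt}}$.

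Since the appendix is titled as a Lagrange-multiplier derivation, I would also add the KKT viewpoint as a remark. The objective is strictly convex on the open simplex. Stationarity of $\sum_i c_i/p_i+\lambda(\sum_i p_i-1)$ gives $-c_i/p_i^2+\lambda=0$, hence $p_i=\sqrt{c_i/\lambda}$. The constraint then fixes $\sqrt\lambda=\sum_j\sqrt{c_j}$, which recovers the same point. Convexity makes this stationary point the global minimiser.
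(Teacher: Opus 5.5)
Your proof is correct, and its main line differs from the paper's. The paper argues only through the Lagrangian $\mathcal L=\sum_i c_i/p_i-\lambda(\sum_i p_i-1)$. Setting $\partial_{p_k}\mathcal L=0$ gives $p_k\propto\sqrt{c_k}$, which is then normalised and substituted back. That derives the candidate, but as written it only exhibits a stationary point. Its global minimality rests tacitly on the convexity mentioned in Section~\ref{sec:optproof} and on $c_i/p_i\to\infty$ at the boundary, and the appendix proof invokes neither.

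Your Cauchy--Schwarz bound $\bigl(\sum_i\sqrt{c_i}\bigr)^2\le\sum_i c_i/p_i$ is instead a certificate valid at every feasible $p$. Optimality therefore needs no convexity or second-order reasoning, and the equality case gives uniqueness for free. It also fits the rest of the paper: with $c_i=\sigma_i^2\norm{a_i}^2$, evaluating your inequality at the uniform choice $p_i=\norm{a_i}^2/\norm{\mathbf{A}}_F^2$ gives, up to the factor $\alpha^2/\norm{\mathbf{A}}_F^4$, exactly the comparison $\Cnoise^{\mathrm{opt}}\le\Cnoise^{\mathrm{uni}}$ of Section~\ref{sec:optproof}. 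Your equality condition $p_i\propto\sqrt{c_i}$ then becomes $\sigma_i\propto\norm{a_i}$.

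What the Lagrange route buys is discovery: it shows where $p\propto\sqrt{c}$ comes from rather than verifying a guessed point. Your closing KKT remark, with strict convexity on the open simplex and the boundary handled by your $+\infty$ convention, is a complete version of the paper's own argument.
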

\begin{proof}
The Lagrangian $\mathcal L=\sum_i c_i/p_i-\lambda(\sum_i p_i-1)$ has
$\partial_{p_k}\mathcal L=-c_k/p_k^2-\lambda=0$, so $p_k\propto\sqrt{c_k}$; normalising gives
$p_k^{\mathrm{opt}}=\sqrt{c_k}/\sum_j\sqrt{c_j}$. Substituting back,
$\sum_i c_i/p_i^{\mathrm{opt}}=(\sum_j\sqrt{c_j})\sum_i\sqrt{c_i}=(\sum_i\sqrt{c_i})^2$.
\end{proof}

\end{document}